\documentclass[11pt,a4paper]{article}

\usepackage{lipsum}
\usepackage{graphicx}
\usepackage{epstopdf}
\usepackage{algorithm}
\usepackage{algorithmic}
\ifpdf
  \DeclareGraphicsExtensions{.eps,.pdf,.png,.jpg}
\else
  \DeclareGraphicsExtensions{.eps}
\fi

\usepackage{exscale,ifthen,array,amsfonts,amsmath,amscd,amssymb,amsthm}

\usepackage{enumerate}

\usepackage[english]{babel}

\usepackage{color}

\usepackage[shadow]{todonotes} 

\usepackage{standalone} 

\usepackage{hyperref}

\allowdisplaybreaks[2]

\DeclareMathOperator{\cl}{cl}
\DeclareMathOperator{\Id}{Id}

\DeclareMathOperator{\supp}{supp}

\DeclareMathOperator{\Prb}{\mathbf{P}}
\DeclareMathOperator{\Mean}{\mathbf{E}}

\newcounter{hypcount}
\newenvironment{hypenv}{ \begin{enumerate}\setcounter{enumi}{\value{hypcount}}}
{\setcounter{hypcount}{\value{enumi}}\end{enumerate}}

\newcounter{hypcount2}

\newcounter{hypcount3}

\newcommand{\hypref}[1]{(A\ref{#1})}
\newcommand{\hyprefall}{(A1)\,--\,(A\arabic{hypcount})}

\newcounter{dummy}
\newcommand{\hyprefallbutlast}{\setcounter{dummy}{\value{hypcount}}
\addtocounter{dummy}{-1}(A1)\,--\,(A\arabic{dummy})}


\theoremstyle{plain}
\newtheorem{theorem}{Theorem}
\newtheorem{proposition}{Proposition}
\newtheorem{lemma}{Lemma}

\theoremstyle{definition}

\newtheorem*{definition}{Definition}

\theoremstyle{remark}

\newtheorem*{exmpl}{Example}
\newtheorem{example}{Example}
\newtheorem{rem}{Remark}



\renewcommand{\phi}{\varphi}


\newcommand{\eqdef}{\doteq}

\newcommand{\new}[1]{#1}

\makeatletter
\newcommand{\easyhyp}[1]{%
\item[#1]\protected@edef\@currentlabel{#1}%
}
\makeatother



\title{An algorithm to construct subsolutions \\of convex optimal control problems}

\author{
  Gianmarco Bet
  \thanks{
    Department of Mathematics and Computer Science ``Ulisse Dini", University of Florence, Viale Giovan Battista Morgagni 67, 50134 Firenze, Italy
    (\href{mailto:gianmarco.bet@unifi.it}{gianmarco.bet@unifi.it}, \url{http://people.dimai.unifi.it/bet/}).
    }
  \and 
  Markus Fischer
    \thanks{Department of Mathematics, University of Padua, via Trieste 63, 35121 Padova, Italy 
    (\href{mailto:fischer@math.unipd.it}{fischer@math.unipd.it}, \url{  https://www.math.unipd.it/~fischer/}).
    }
}

\usepackage{amsopn}



\begin{document}
\maketitle

\begin{abstract}
  We propose an algorithm that produces a non-decreasing 
sequence of subsolutions for a class of optimal control problems distinguished by the property that the associated Bellman operators preserve convexity. In addition to a theoretical discussion and proofs of convergence, numerical experiments are presented to illustrate the feasibility of the method.
\end{abstract}

\medskip {\small \textbf{MSC2020 subject classifications:}   (Primary) 90C39, 93E20, (Secondary) 49L20, 49M25.}

\medskip {\small \textbf{Key words and phrases:} deterministic\,/\, stochastic optimal control, Markov decision process, dynamic programming, discrete-time subsolution.}


\section{Introduction}\label{SectIntro}

In this paper, we introduce an iterative scheme that globally approximates the value functions associated with \new{certain} discrete time deterministic or stochastic optimal control problems. At each step our algorithm constructs a subsolution of the control problem as the maximum of a collection of elementary subsolutions. Significantly, this procedure does not rely on a \new{state-space} discretization scheme. \new{As a consequence, when the dimension of the noise space is not too large, our algorithm can efficiently handle high-dimensional problems.} On one hand, the subsolution produced by the algorithm constitutes a lower bound of the value function \new{on the entire state space}. On the other hand, computing the costs associated with the controlled trajectories  of the system started \new{at any point $x$, where controls are chosen according to the feedback strategy induced by the subsolution,} gives an upper bound on the value function at $x$. These a posteriori error bounds provide a way to assess the precision of the approximation.
The algorithm exploits a certain assumed convexity property of the control problem in order to generate a non-decreasing sequence of subsolutions. 
To be more precise, we consider problems whose Bellman operator, seen as an operator between function spaces, has the property of preserving convexity, that is, \new{for some set of convex functions the Bellman operator maps that set into itself. For problems of this type,} the value function is convex. \new{We give two examples of classes of convexity-preserving control problems, one with affine-linear dynamics  (\emph{linear-convex models}), the other with dynamics enjoying a certain monotonicity property (\emph{monotone-convex models}). Our assumption on the Bellman operator} implies that suitably constructed elementary ({hyperplane}) functions are subsolutions. The subsolutions produced by the algorithm are then piecewise affine functions defined as pointwise maxima of a growing family of hyperplanes. 

The scheme we present is inspired by \cite{F}, where a related procedure was proposed to solve a different class of problems. The algorithm consists of two main steps. First, starting from an initial point that can be fixed or sampled from a given distribution, a trajectory of the system is simulated by means of the currently stored subsolution. Second, the stored subsolution is updated along the generated trajectory. Under mild assumptions, the functions produced converge everywhere and locally uniformly to a convex subsolution of the optimal control problem. Furthermore, the subsolutions generated in this way converge to the value function at time zero on the support of the initial distribution \new{with probability one}. The subsolution of a control problem is less or equal than the value function, thus the algorithm has a global scope: not only it returns the value function at time zero, but it also gives true lower bounds on the whole state space. \new{Finally, the structure of our algorithm makes it well-suited for parallelization. This can be accomplished by sampling $M$ starting points and independently simulating $M$ trajectories of the system during the trajectory simulation step. Then, the new subsolution is obtained by simultaneously updating the previous subsolution along the $M$ trajectories. For more details see Remark \ref{rem:parallelization} below.}

\new{Our work differs from \cite{F} in several crucial aspects. First, \cite{F} considers problems whose Bellman operator maps \textit{globally} Lipschitz functions into globally Lipschitz functions. In particular, the value function is globally Lipschitz continuous. This is in contrast with our convex setting. In fact, the algorithm in \cite{F} contructs subsolutions as the maxima of cone-shaped elementary functions. Second, here we face the additional technical challenge of bounding the costs away from the origin. Third, we extend \cite{F} by considering a random initial condition.}

The idea of approximating convex functions with their supporting hyperplanes dates back to Kelley \cite{KE}, who was one of the first proponents of the cutting plane algorithm for convex optimization \new{(cf.\ \cite{bertsekas2015}). The cutting plane algorithm} deals with a problem of static optimization, while our approximation scheme crucially exploits the dynamic structure of the control problem.

\new{We now present a brief overview of known solution methods for deterministic and stochastic optimal control problems. Although our method applies to discrete-time problems, it is motivated by the solution of optimal control problems in continuous time. We thus view our problems as resulting from time discretization of the underlying controlled dynamics (also cf.\ the numerical experiments below). In comparing our approach to established methods, we therefore focus on the literature dealing with the numerical solution of continuous-time problems.

For a large class of continuous-time optimal control problems, the value function of the problem can be characterized through an infinitesimal version of the Dynamic Programming Principle as the unique viscosity solution of a first or second order partial differential equation, the so-called \emph{Hamilton-Jacobi-Bellman} (HJB); cf.\ \cite{flemingsoner06}. Having computed the value function also yields an optimal feedback or \emph{closed-loop} strategy. A wide range of methods have been developed to approximate the solutions of HJB equations numerically. One of the earlier references is \cite{capuzzo1989discrete}, where a monotone fully discrete scheme is proposed to approximate the viscosity solution of the HJB equation associated with a deterministic continuous-time control problem. Similar techniques have been applied to stochastic control problems as well, see \cite{barlesjakobsen2007} and the references therein. A large class of approximation algorithms is presented in \cite{kushner2001numerical}, collectively referred to as \emph{Markov Chain approximation}. The common idea underlying these methods is to approximate the controlled continuous-time process by a Markov Chain on a finite state space, and then to solve this simpler control problem, in particular through finite-state discrete-time dynamic programming. The approximating chain is parametrized by a parameter analogous to the step size of a finite grid, which regulates the precision of the approximation. Semi-Lagrangian schemes for HJB equations are based on a discretization of the underlying controlled characteristics \cite{falcone2013semi}; they can be given a probabilistic interpretation in terms of a Markov Chain approximation. Those methods are powerful for the numerical solution of problems with low-dimensional state space, but generally not applicable when the state space dimension is large \cite{grune2015numerical}. On the other hand, the HJB approach works under rather general assumptions. This is unlike our approach, which crucially exploits the specific structure of the control problem.

A rather different approach to the solution of dynamic optimal control problems is presented in \cite{mceneaney2006maxplus}. This method is based on the observation that the Bellman operator of a control problem is linear with respect to the operations of a suitably chosen algebra. In this so-called \emph{maxplus algebra}, summation between two elements is defined as their maximum, and multiplication as the usual summation. If the control problem is a maximization problem, the maxplus approach approximates the value function via a linear combination (in the maxplus algebra) of suitably chosen basis functions. Since the Bellman operator is maxplus-linear, the application of the Dynamic Programming Principle reduces to (maxplus) matrix-vector multiplication. This approach shares some features with our scheme. In both approaches the value function is approximated from below via a (maxplus-)linear combination of elementary functions, which are computed exploiting the Dynamic Programming Principle. However, the two approaches differ in one crucial aspect. The maxplus algorithms approximate from below the value function associated with a \emph{maximization} problem, while our scheme approximates from below a \emph{minimization} problem. The two approaches are not equivalent, because for minimization problems, the maxplus algebra is replaced by the minplus algebra, and consequently the value function is approximated via the minimum of elementary functions.

More recently, \cite{picarellireisinger} applies the semi-Lagrangian approach to solve an optimal problem over a finite time horizon with one-dimensional linear dynamics and where only the terminal costs are non-zero. The dual problem is a minimization problem with linear dynamics and convex terminal costs, and thus is compatible with our key assumption. The continuous-time problem is approximated by discretizing space as well as time, and by computing expectations with respect to the (one-dimensional) Gaussian noise via the Gauss-Hermite quadrature formula. Exploiting duality, the authors obtain, in addition to a one-sided \emph{a priori} error bound, two-sided \emph{a posteriori} error bounds for the approximate value function in terms of the discretization steps and the number of quadrature points. As the scheme relies on a full state space discretization, it would not be suitable for problems with high-dimensional state space.    

The value function of a discrete-time finite horizon problem can in principle be computed, thanks to the Dynamic Programming Principle, through backward induction starting from the terminal costs. Instead of discretizing the state space, one can propagate backwards an estimate of the value function at each step in time. Those estimates can be obtained by sampling support points in space and computing conditional expectations. In \cite{belomestny2010regression} several Monte Carlo-based schemes are proposed to obtain  estimates for the value function and compute conditional expectations. It is assumed that the kernel of the controlled process can be decomposed as the product of an uncontrolled kernel and a density representing the effect of the control on the dynamics. This particular structure allows to sample state points from an uncontrolled forward process and to obtain efficient global approximations for the value function. For a survey of Monte Carlo techniques in the context of dynamic programming, see the book \cite{powell2007approximate} and references therein.





Another succesful approach to the solution of continuous-time optimal control problems is based on Pontryagin's Maximum Principle (PMP). This produces the approximate optimal trajectory of the system, as well as an \emph{open-loop} optimal control $\mu^*(t)$. This is interpreted as a local solution to the optimal control problem.
In \cite{bismut1973conjugate}, linear backward stochastic differential equations (BSDEs) were introduced as the adjoint equations in the Pontryagin Maximum Principle of a stochastic control problem. Since then, a large literature has grown on the topic of approximating the solution of a general BSDE. Recently, \cite{briand2014simulation, geiss2016simulation} proposed a numerically-efficient algorithm to solve BSDEs based on Wiener chaos expansion. On the one hand, like the HJB approach, this approach also works under rather general assumptions. On the other hand, the PMP approach only provides a local solution to the control problem. This should be contrasted with our scheme, which provides double-sided global approximations for the value function.

In \textit{direct discretization} approaches, the problem is first converted into a static nonlinear optimization problem by discretizing time and by considering control strategies that are parametrized by a finite number of parameters. The problem is then solved using fast and reliable NLP solvers, see, e.g., \cite{andersson2019casadi,hall2021sequential} and references therein. Close to direct discretization is \cite{lasserre2008nonlinear}, where a class of nonlinear \textit{deterministic} optimal control problems is successively approximated from below via the so called LMI-relaxations. In this approach, the original control problem is replaced by an infinite-dimensional linear program  (LP) over two spaces of measures. If the dynamics and control constraints are polynomials, only the moments of the two measures appear in the LP formulation. Considering a truncation of the problem to a finite but growing number of moments gives a non-decreasing sequence of lower bounds on the optimal value of the LP. This approach is also \textit{local} since it provides an approximation of the value function and the optimal control policy starting at one point of the state space. Compared to the direct discretization approaches, which seem to be particularly powerful for deterministic problems, our method is clearly more limited in scope, as it only applies to (deterministic or stochastic) convex problems. Its main advantage lies in the construction of subsolutions, which yield sub-optimal feedback strategies on the entire state space. While direct discretization yields the optimal value, together with a (nearly) optimal open-loop control, at any fixed initial state, the subsolutions constructed in our approach starting from a fixed initial state provide sub-optimal feedback controls for arbitrary initial states. This is particularly useful for (small) perturbations of the original initial state. We illustrate this point at the end of Subsection \ref{sec:deterministic_numerical_experiments}.

More recently, the use of machine learning techniques, more specifically deep learning, has been proposed to solve high-dimensional optimal control problems. One of the crucial advantages of these methods is that deep neural networks are capable of representing high-dimensional functional data without a space discretization scheme. Another advantage is the wide availability of powerful neural network libraries such as \textsc{TensorFlow}. Deep learning algorithms have been applied in a variety of ways in the context of optimal control. In the discrete-time setting, \cite{fecamp2019risk,han2016deep} uses a state-of-the-art deep neural network to minimize the cost functional and generate a Markov feedback control policy. In \cite{bachouch2021deep,hure2021deep}, classical dynamic programming backward iteration schemes are combined with neural network numerical computations to find an optimal policy and the value function. For more details and a recent review of these approaches, see \cite{germain2021neural}. In the continuous-time setting, neural networks have been used for solving first-order Hamilton-Jacobi equations \cite{darbon2020overcoming,darbon2021some} corresponding to deterministic control problems and, more generally, nonlinear PDEs, see, e.g., \cite{weinan2021algorithms} for a review of recent results. Like our algorithm, these methods do not rely on a state-space discretization scheme. As compared to these extremely powerful and general deep learning approaches, we stress that, for our class of problems, we achieve useful results with limited computational resources. Our numerical tests are carried out on a small workstation using a single \mbox{CPU} while, for instance, in \cite{germain2021neural} numerical experiments are carried out using hundreds of \mbox{CPUs} and \mbox{GPUs} in parallel.
}


The rest of the paper is organized as follows. In Section \ref{sec:optimization_problem}, we introduce a class of discrete-time deterministic and stochastic optimal control problems \new{with convexity-preserving Bellman operator, and show that it includes monotone-convex and} linear-convex models. In Section \ref{sec:algorithm}, we present our algorithm in the stochastic setting and prove that it produces a non-decreasing sequence of subsolutions. In Section~\ref{sec:convergence_deterministic_systems}, we focus on the deterministic setting and show that \new{for any fixed initial state} the subsolutions generated by the algorithm converge to the value function at time zero. In this setting, we also obtain that the controls (resp.\ trajectories) generated by the algorithm converge \new{along subsequences} to the optimal controls (resp.\ optimal trajectories). In Section~\ref{sec:convergence_stochastic_systems}, we extend the proof of convergence to the case of stochastic dynamics, allowing for random initial states. In Section~\ref{sec:numerical_experiments}, we present the results of numerical experiments on deterministic and stochastic linear-convex control problems.

\section{The optimization problem}\label{sec:optimization_problem}

The control problems we consider are a subclass of the semicontinuous Borel models studied in \cite{bertsekasshreve96}. The dynamics are given by controlled time inhomogeneous Markov chains with {state space} $\mathcal{X}$, {disturbance space} $\mathcal{Y}$, and {action space} $\Gamma$. For fixed dimensions $d, d_{1}, d_{2}\in \mathbb{N}$, we set $\mathcal{X}\doteq \mathbb{R}^{d}$, $\mathcal{Y}\doteq \mathbb{R}^{d_{1}}$, and choose $\Gamma$ as a non-empty measurable subset of $\mathbb{R}^{d_{2}}$. The evolution of the system is determined by the so-called \emph{system function}, which we take to be a Borel-measurable function $\Psi\!: \mathbb{N}_{0}\times \mathcal{X}\times \Gamma\times \mathcal{Y} \rightarrow \mathcal{X}$, \new{where $\mathbb N_0$ denotes the set of non-negative integers.}

Let $\mu$ be a probability measure on the Borel $\sigma$-algebra of $\mathcal{Y}$; $\mu$ will be called \emph{noise distribution}. We will refer to any $\mathcal{Y}$-valued random variable that has distribution $\mu$ as {disturbance} or {noise variable}. Let $\mathcal{M}$ denote the set of all non-randomized {Markov feedback control policies}, that is,
\begin{align*}%
  \mathcal{M}\doteq \{u : \mathbb{N}_{0}\times \mathcal{X} \rightarrow \Gamma \text{ such that $u$ is Borel measurable}\}.
\end{align*}%
Fix a complete probability space $(\Omega,\mathcal{F},\Prb)$ \new{which supports a sequence $(\xi_{j})_{j\in\mathbb{N}}$ of independent and identically distributed (i.i.d.) noise variables. Given any Markov control policy $u \in \mathcal{M}$, initial state $x_{0}\in \mathcal{X}$,} and initial time $j_{0} \in \mathbb{N}_{0}$, the corresponding {state sequence} is recursively defined, for each $\omega \in \Omega$, by
\begin{align} \label{eq:state_recursion}%
   X_{j_{0}}(\omega)\doteq x_{0},\quad X_{j+1}(\omega)\doteq \Psi(j,X_{j}(\omega),u(j,X_{j}(\omega)),\xi_{j+1}(\omega)),\, j \geq j_{0}.&
\end{align}%
Performance is measured in terms of expected costs over a finite deterministic {time horizon}, denoted by $N \in \mathbb{N}$. Let $f\!:\mathbb{N}_{0}\times \mathcal{X}\times \Gamma \rightarrow \mathbb{R}$, $F\!: \mathcal{X} \rightarrow \mathbb{R}$ be measurable functions, denoting respectively the {running costs} and the {terminal costs}. We will refer to $f$, $F$ as \emph{cost coefficients}. The {cost functional} $J$ is then defined as
\begin{equation} \label{eq:expected_costs}%
  J(j_{0},x_0,u)\doteq \Mean\left[\sum_{j=j_{0}}^{N-1} f\bigl(j,X_j,u(j,X_{j})\bigr) + F(X_N)\right]\!,
\end{equation}%
where $(X_j)_{j\geq j_{0}}$ is the state sequence generated according to \eqref{eq:state_recursion} with control policy $u$ and $X_{j_{0}} \doteq x_{0}$. If $f$, $F$ are bounded from below, then $J$ is well defined as a mapping $\{0,\ldots,N\}\times \mathcal{X}\times \mathcal{M} \rightarrow \mathbb{R}\cup\{\infty\}$. Notice that $J$ depends on the noise variables only through their distribution and does not depend on the particular choice of the underlying probability space.

The \emph{value function} of the control problem is defined as
\begin{equation} \label{eq:value_function}%
  V(j_{0},x_{0})\doteq \inf_{u\in\mathcal{M}} J(j_{0},x_{0},u),\quad (j_{0},x_{0})\in \{0,\ldots,N\}\times \mathcal{X}.
\end{equation}%
If $f$ and $F$ are bounded from below, then $V$ is well defined as a mapping $\{0,\ldots,N\}\times \mathcal{X} \rightarrow \mathbb{R}\cup\{\infty\}$. The value function $V$ is often understood as the \emph{solution} of the control problem. 

For each $j\in\mathbb N_0$, the \emph{one-step Bellman operator} $\mathcal{L}_{j}$ is a functional that acts on the space of functions $g\!: \mathcal{X} \rightarrow \mathbb{R}\cup\{\infty\}$ as
\begin{align} \label{eq:bellman_operator}%
  & \mathcal{L}_{j}(g)(x)\doteq \inf_{\gamma\in\Gamma} \left\{f(j,x,\gamma) + \int_{\mathcal{Y}} g\bigl(\Psi(j,x,\gamma,y)\bigr) \mu(\mathrm dy)\right\}, & & x \in \mathcal{X},&
\end{align}%
provided the right hand side is well defined. Note that, crucially, the Bellman operator is monotone: \new{If $g$, $\tilde{g}$ are functions such that $\tilde{g} \geq g$ (in the sense that $\tilde{g}(x) \geq g(x)$ for every $x\in \mathcal{X}$)} and $\mathcal{L}_{j}(g)$, $\mathcal{L}_{j}(\tilde{g})$ are well defined, then $ \mathcal{L}_{j}(\tilde{g})\geq \mathcal{L}_{j}(g)$.

We make the following assumptions. \new{With $\mathfrak{C}$ a set of convex functions $\mathcal{X} \rightarrow [0,\infty)$:
\begin{hypenv}%
  \item\label{hyp:system_function_continuous} The system function $\Psi$ is continuous.

  \item\label{hyp:cost_coefficients} The cost coefficients $f$, $F$ are non-negative and continuous. 
  
  \item\label{hyp:control_space_compact} The space of control actions $\Gamma$ is compact.
  
  \item \label{hyp:terminal_cost_convex} The terminal cost coefficient $F$ is in $\mathfrak{C}$.
  
  \item\label{hyp:bellman_operator_preserves_convexity} The Bellman operator preserves the set of convex functions $\mathfrak{C}$: If $g\in \mathfrak{C}$, then $\mathcal{L}_j(g)\in \mathfrak{C}$ for every $j\in \mathbb{N}_{0}$.

\end{hypenv}

Here, $\mathfrak{C}$, the set of non-negative convex functions appearing in \hypref{hyp:terminal_cost_convex} and \hypref{hyp:bellman_operator_preserves_convexity}, is supposed to satisfy the following structure assumptions:
\begin{enumerate}[(i)]
  \item the constant function equal to zero is in $\mathfrak{C}$;
  
  \item if $g, \tilde{g}\in \mathfrak{C}$, then $g \vee \tilde{g} \in \mathfrak{C}$, where $(g \vee \tilde{g})(x)\doteq \max\{g(x), \tilde{g}(x)\}$, $x\in \mathcal{X}$;
  
  \item if $g \in \mathfrak{C}$, $x \in \mathcal{X}$, and $v \in \mathbb{R}^{d}$ is such that $g(y) - g(x) \geq v\cdot (y-x)$ for all $y\in \mathcal{X}$, then $y \mapsto \max\{0, g(x) + v\cdot(y-x)\}$ is in $\mathfrak{C}$.
  
\end{enumerate}

}

\new{Assumptions \hypref{hyp:system_function_continuous}--\hypref{hyp:control_space_compact}} are standard assumptions in the optimal control literature, see for example \cite[Condition 3.3.2, p.~27]{hernandezlermalasserre96}, and \cite[Proposition C.4, p.~176]{hernandezlermalasserre96}. \new{In \hypref{hyp:cost_coefficients}, semi-continuity of the cost coefficients would be sufficient; we exploit their continuity only in the proof of Theorem~\ref{prop:convergence_deterministic_dynamics}.}

Assumption~\hypref{hyp:bellman_operator_preserves_convexity}, on the other hand, is the key ingredient for our scheme to construct subsolutions. \new{Together with Assumption~\hypref{hyp:terminal_cost_convex}, it will imply that the value function is convex. Below, we provide two classes of models where the Bellman operator preserves a set of convex functions, one with linear dynamics and convex costs (linear-convex models), the other with non-linear monotone-convex dynamics and convex costs (monotone-convex models). In the former case, \hypref{hyp:bellman_operator_preserves_convexity} is satisfied with $\mathfrak{C}$ chosen as the set of all non-negative convex functions (provided the noise distribution $\mu$ has bounded support). In the latter case, \hypref{hyp:bellman_operator_preserves_convexity} holds with $\mathfrak{C}$ as the set of all non-negative non-decreasing convex functions.}

We begin by collecting some standard consequences of Assumptions \hypref{hyp:system_function_continuous}--\hypref{hyp:control_space_compact}. If $g : \mathcal{X} \rightarrow \mathbb{R}\cup\{\infty\}$ is lower semicontinuous and bounded from below, then $\mathcal{L}_{j}(g) : \mathcal{X} \rightarrow \mathbb{R}\cup\{\infty\}$ is also lower semicontinuous and bounded from below. 
Assumptions \hypref{hyp:system_function_continuous}--\hypref{hyp:control_space_compact} also guarantee that the Principle of Dynamic Programming holds and that optimal Markov feedback policies exist.
\begin{lemma}[Dynamic Programming, \cite{bertsekasshreve96}] \label{lem:principle_dynamic_programming}%
  \new{Assume \hypref{hyp:system_function_continuous}--\hypref{hyp:control_space_compact}.} Then $V$ is lower semicontinuous, \new{non-negative}, and for all $x \in \mathcal{X}$, $j \in \{0,\ldots,N\!-\!1\}$,
  \begin{align*}%
      V(j,x) = \mathcal{L}_{j}(V(j\!+\!1,\cdot))(x).
  \end{align*}%
Moreover, an optimal Markov feedback control policy exists, that is, there is $u \in \mathcal{M}$ such that $V(j,x) = J(j,x,u(j,x))$ for all $x \in \mathcal{X}$, $j \in \{0,\ldots,N\!-\!1\}$.
\end{lemma}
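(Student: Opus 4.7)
The plan is to verify that our control problem fits into the framework of lower-semicontinuous Borel models in \cite{bertsekasshreve96} and then invoke their dynamic programming theorem directly. The dynamics can be encoded via the Borel stochastic kernel $Q_{j}(B\mid x,\gamma) \doteq \mu(\{y\in\mathcal{Y} : \Psi(j,x,\gamma,y) \in B\})$ on $\mathcal{X}$; by \hypref{hyp:system_function_continuous} together with dominated convergence this kernel is continuous against bounded continuous test functions, \hypref{hyp:cost_coefficients_lower_semicont_bounded_below} yields lower semicontinuity and non-negativity of the running and terminal costs, and \hypref{hyp:control_space_compact} supplies compactness of $\Gamma$. Together these ingredients match the semicontinuous model hypotheses of \cite{bertsekasshreve96}.

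The concrete argument I would carry out proceeds by backward induction on $j$. The base case $V(N,\cdot) = F$ is lower semicontinuous and bounded from below by \hypref{hyp:cost_coefficients_lower_semicont_bounded_below}. For the inductive step, assuming $V(j\!+\!1,\cdot)$ is lower semicontinuous and non-negative, I would show that
\[
(x,\gamma) \mapsto f(j,x,\gamma) + \int_{\mathcal{Y}} V\bigl(j\!+\!1,\Psi(j,x,\gamma,y)\bigr)\, \mu(dy)
\]
is lower semicontinuous on $\mathcal{X}\times \Gamma$. The first summand is handled directly by \hypref{hyp:cost_coefficients_lower_semicont_bounded_below}. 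For the integral, lower semicontinuity of the integrand in $(x,\gamma)$ for each fixed $y$ follows from \hypref{hyp:system_function_continuous} and the inductive hypothesis, and Fatou's lemma (applicable thanks to non-negativity) transfers this to the integral. Compactness of $\Gamma$ and the lower-semicontinuous Berge maximum theorem then give both lower semicontinuity of $\mathcal{L}_j(V(j\!+\!1,\cdot))$ and attainment of the infimum at each $x$.

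Once attainment is established, a measurable selection theorem for lower-semicontinuous objectives on compact Borel action sets (e.g.\ \cite[Prop.~7.33]{bertsekasshreve96}) yields a Borel-measurable selector $u(j,\cdot) : \mathcal{X} \to \Gamma$ achieving the infimum. Concatenating these over $j = 0, \ldots, N\!-\!1$ produces the desired optimal Markov feedback policy $u \in \mathcal{M}$. The Bellman identity $V(j,x) = \mathcal{L}_j(V(j\!+\!1,\cdot))(x)$ then follows by the usual two-sided comparison: a standard splitting of \eqref{eq:expected_costs} at time $j\!+\!1$ together with the tower property gives ``$\geq$'', while the existence of measurable minimizers allows us to paste selectors to realize ``$\leq$''.

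The main technical obstacle is the transfer of lower semicontinuity through the noise integral; this is where non-negativity of $V(j\!+\!1,\cdot)$ inherited from \hypref{hyp:cost_coefficients_lower_semicont_bounded_below} is indispensable, since without it Fatou's lemma is unavailable in its one-sided form. Compactness of $\Gamma$ is likewise essential, both for attainment of the infimum and for the measurable selection step. As all of these pieces are worked out in detail in \cite{bertsekasshreve96}, the lemma ultimately reduces to a direct appeal to their framework.
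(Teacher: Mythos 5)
Your proposal is correct and follows the same route the paper takes: the paper offers no proof of this lemma beyond the citation to \cite{bertsekasshreve96}, and your argument simply verifies that \hyprefallbutlast{} place the problem in the lower-semicontinuous Borel model of that reference and then rehearses the standard backward-induction/measurable-selection argument. The details you supply (Fatou for the noise integral, compactness of $\Gamma$ for attainment, Proposition~7.33 for the selector) are the right ones and contain no gaps.
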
%
\new{In conjuction with Assumptions \hypref{hyp:terminal_cost_convex} and \hypref{hyp:bellman_operator_preserves_convexity}, Lemma~\ref{lem:principle_dynamic_programming} implies that the value function $V$ is convex and finite-valued with values in $[0,\infty)$, hence also locally Lipschitz continuous; see, for instance, Theorem~B.3.1.2 in \cite[p.\,105]{HUL2001}.
}

The class of control policies can be enlarged in different ways without changing the value function. In Section~\ref{sec:convergence_stochastic_systems} we will consider the following weak formulation of our control problems; cf.~Definition~2.4.2 in \cite[p.\,64]{yongzhou99} in the context of continuous-time models.
\begin{definition}\label{def:stochastic_control_policy}
A \emph{stochastic control policy} is a quadruple\\ 
$((\Omega,\mathcal{F},\mathbf{P}),(\mathcal{F}_{j})_{j\in\mathbb{N}_{0}},(\xi_{j})_{j\in\mathbb{N}},(\gamma_{j})_{j\in\mathbb{N}_{0}})$ such that
\begin{enumerate}[(i)]%
  \item $(\Omega,\mathcal{F},\mathbf{P})$ is a complete probability space;
  
  \item $(\mathcal{F}_j)_{j\in\mathbb{N}_{0}}$ is a filtration in $\mathcal{F}$;

  \item $(\xi_{j})_{j\in\mathbb{N}}$ is an independent sequence of noise variables such that, for every $j\in \mathbb{N}$, $\xi_{j}$ is $\mathcal{F}_{j}$-measurable and independent of $\mathcal{F}_{j-1}$;

  \item $(\gamma_{j})_{j\in\mathbb N_0}$ is a sequence of $\Gamma$-valued random variables adapted to the filtration $(\mathcal{F}_{j})$ (i.e., $\gamma_{j}$ is $\mathcal{F}_{j}$-measurable for every $j\in \mathbb{N}_{0}$).

\end{enumerate}%
\end{definition}%
Let $\mathcal{R}$ denote the set of all stochastic control policies. We will suppress the dependence of $(\gamma_{j})_{j\in\mathbb N_0}$ on the stochastic basis; thus, we will write $(\gamma_{j})\in \mathcal{R}$ instead of $((\Omega,\mathcal{F},\Prb), (\mathcal{F}_{j})_{j\in\mathbb{N}_{0}}, (\xi_{j})_{j\in\mathbb{N}}, (\gamma_{j})_{j\in\mathbb{N}_{0}}) \in \mathcal{R}$.

Given a stochastic control policy $(\gamma_{j})\in \mathcal{R}$, an initial state $x_{0}$, and an initial time $j_{0} \in \mathbb{N}_{0}$, the corresponding state sequence is recursively defined, for each $\omega\in \Omega$, by
\begin{align} \label{eq:state_recursion_weak}%
  & X_{j_{0}}(\omega)\doteq x_{0},& & X_{j+1}(\omega)\doteq \Psi\bigl(j,X_{j}(\omega),\gamma_{j}(\omega),\xi_{j+1}(\omega)\bigr),& &j \geq j_{0}.&
\end{align}%
The costs associated with such a state sequence and control policy are given by
\begin{equation} \label{eq:cost_functional_weak}%
  \hat{J}\bigl(j_{0},x_{0},(\gamma_{j})\bigr)\doteq \Mean\left[\sum_{j=j_{0}}^{N-1} f(j,X_j,\gamma_{j}) + F(X_N)\right],
\end{equation}%
where expectation is taken with respect to the probability measure of the stochastic basis coming with $(\gamma_{j})$, and $X_{j_{0}}(\omega)\doteq x_0$. The \emph{value function} in the weak formulation is defined by
\begin{equation} \label{eq:value_function_weak}%
  \hat{V}(j_{0},x_{0})\doteq \inf_{(\gamma_{j})\in\mathcal{R}} \hat{J}\bigl(j_{0},x_{0},(\gamma_{j})\bigr), \quad (j_{0},x_{0})\in \{0,\ldots,N\}\times \mathcal{X}.
\end{equation}%

Under \new{Assumptions \hypref{hyp:system_function_continuous}--\hypref{hyp:control_space_compact}}, the value function $V$ defined in \eqref{eq:value_function} over the class of Markov feedback control policies with fixed stochastic basis coincides with $\hat{V}$, the value function defined in \eqref{eq:value_function_weak} over the class of stochastic control policies with varying stochastic basis. Results of this type are standard; \new{for details see, for instance, Lemma~2 in \cite{F}.}
\begin{lemma} \label{lem:value_functions_strong_weak_equal}
  \new{Assume \hypref{hyp:system_function_continuous}--\hypref{hyp:control_space_compact}}. Then $V = \hat{V}$.
\end{lemma}
The notion of subsolution is central to our algorithm.

\begin{definition}
A function $w$ from $\{0,\ldots,N\}\times \mathcal{X}$ to $\mathbb{R}\cup \{\infty\}$ is a  \emph{subsolution} of the control problem if
\begin{enumerate}[(i)]
  \item $w$ is lower semicontinuous and bounded from below;
  
  \item $w(T,x) \leq F(x)$ for all $x\in \mathcal{X}$;
  
  \item for every $j\in\{0,\ldots,N\!-\!1\}$, $w(j,x)\leq \mathcal{L}_{j}(w(j+1,.))(x)$ for all $x\in \mathcal{X}$.
\end{enumerate}
\end{definition}
We observe that a subsolution never exceeds the value function: If $w$ is a subsolution, then $w(j,x)\leq V(j,x)$ for all $(j,x)\in \{0,\ldots,N\!-\!1\}\times \mathcal{X}$. This is a consequence of Lemma~\ref{lem:principle_dynamic_programming} and the monotonicity of the Bellman operator.
\begin{exmpl}[Example: Linear-convex models] \label{ex:linear_convex_models}
Let $A$, $B$, $C$ be real-valued matrices of dimensions $d\times d$, $d\times d_{2}$, and $d\times d_{1}$, respectively. Let $\Gamma \subset \mathbb{R}^{d_{2}}$ be compact and \emph{convex}. Consider the system function
\begin{align*}
\Psi(j,x,\gamma,y)\doteq Ax + B\gamma + C y,\quad (j,x,\gamma,y)\in \mathbb{N}_{0}\times \mathcal{X}\times \Gamma\times \mathcal{Y}.
\end{align*}%
\new{Choose non-negative cost coefficients $f$, $F$ such} that $(x,\gamma)\mapsto f(j,x,\gamma)$, $j\in \mathbb{N}_{0}$,  and $x\mapsto F(x)$ are convex. We emphasize that convexity of $f$ in the state variable alone is not enough for our purposes. Choose any probability measure $\mu$ \new{with bounded support} on the Borel sets of $\mathcal{Y} = \mathbb{R}^{d_{2}}$ as noise distribution. This is a \emph{linear-convex Markov control problem}. \new{Choose $\mathfrak{C}$ as the set of all non-negative convex functions on $\mathcal{X}$. Then} Assumptions\hyprefallbutlast\ are satisfied. To check \hypref{hyp:bellman_operator_preserves_convexity} we \new{prove that the function $z\mapsto\mathcal L_j(g)(z)$ is convex whenever $g$ is a convex function. Let $x, y \in\mathcal X$, and let $\lambda_1, \lambda_2 \in [0,1]$ be such that $\lambda_1 + \lambda_2 = 1$. The Bellman operator computed in the convex combination $\lambda_1 x + \lambda_2 y$ is}
\begin{align*}%
\mathcal{L}_j(g)&\left(\lambda_1 x + \lambda_2 y \right) \\
 &=\inf_{\gamma\in \Gamma}\left\{f(j,\lambda_1 x + \lambda_2 y,\gamma)+\int_{\mathcal{Y}} g\left(\lambda_1 Ax+ \lambda_2 Ay + B\gamma + Cz\right) \mu (\mathrm d z)\right\}.
\end{align*}%
Now choose $\gamma_x,\gamma_y\in\Gamma$ such that they minimize respectively $\mathcal{L}_j(g)(x)$ and $\mathcal{L}_j(g)(y)$. Set $\tilde{\gamma}=\lambda_1 \gamma_x + \lambda_2 \gamma_y$. \new{Since $\Gamma$ is a convex set, $\tilde\gamma\in \Gamma$.} Then, bound $\mathcal L_j(g)$ as
\begin{align*}%
\mathcal{L}_j(g)(\lambda_1x + \lambda_2 y) &\leq f(j,\lambda_1x+\lambda_2 y,\tilde{\gamma})+\int_{\mathcal{Y}} g\left(\lambda_1 Ax+\lambda_2 Ay + B\tilde{\gamma}+ Cz\right) \mu(\mathrm d z) \\
&\leq \lambda_1f(j,x,\gamma_x)+\lambda_2f(j,y,\gamma_y)+\lambda_1 \int_{\mathcal{Y}} g\left(Ax+B\gamma_x+Cz\right) \mu (\mathrm d z)\\
&\quad+\lambda_2\int_{\mathcal{Y}} g\left(Ay+B\gamma_y + Cz\right) \mu (\mathrm d z) \\
&= \lambda_1\mathcal{L}_j(g)(x)+\lambda_2\mathcal{L}_j(\phi)(y).
\end{align*}%
In the second inequality above, we used the convexity of $g$ and of $f$ in both the state and control variable. The last equality holds by definition of $\gamma_x$ and $\gamma_y$. Thus Assumption \hypref{hyp:bellman_operator_preserves_convexity} holds for this class of problems.
\end{exmpl}

\new{
\begin{exmpl}[Example: Monotone-convex models]\label{ex:monotone_convex_models}
Recall that $\mathcal{X} = \mathbb{R}^{d}$, $\mathcal{Y} = \mathbb{R}^{d_1}$. Let $\Gamma \subset \mathbb{R}^{d_{2}}$ be compact and \emph{convex}. Let the system function $\Psi$ be of the form
\[
  \Psi(j,x,\gamma,y) = \tilde{\Psi}(x,y) + B(y) \gamma,
\]
where $B(y)$, $y\in \mathcal{Y}$, are $d\times d_{2}$-matrices such that $y \mapsto B(y)$ is bounded continuous, while $\tilde{\Psi} = (\tilde{\Psi}_{1},\ldots, \tilde{\Psi}_{d})$ is a continuous function $\mathcal{X}\times \mathcal{Y} \rightarrow \mathcal{X}$ such that, for every $i\in \{1,\ldots,d\}$, every $y\in \mathcal{Y}$, $x\mapsto \tilde{\Psi}_{i}(x,y)$ is convex and \emph{non-decreasing}. Here, a mapping $g\!: \mathbb{R}^{d} \rightarrow \mathbb{R}$ is non-decreasing if $g(x) \leq g(\tilde{x})$ whenever $x \leq \tilde{x}$ with respect to the component-wise (partial) order on $\mathbb{R}^{d}$. For example, $g(x) = \sum_{k=1}^{M} \phi_{k}\left(v^{(k)}\cdot x \right)$ is convex and non-decreasing if $v^{(k)} \in [0,\infty)^{d}$ and $\phi_{k}\colon \mathbb{R} \rightarrow \mathbb{R}$ is convex and non-decreasing for every $k\in \{1,\ldots,M\}$.

Choose non-negative cost coefficients such that $F$ is convex and non-decreasing, $(x,\gamma)\mapsto f(j,x,\gamma)$ is convex for all $j\in \mathbb{N}_{0}$, and $x\mapsto f(j,x,\gamma)$ is non-decreasing for all $j\in \mathbb{N}_{0}$, $\gamma \in \Gamma$. An example of such a function $f$ is $f(j,x,\gamma) = \max\{0,v\cdot x\} + |\gamma|^{2}$ for some $v\in \mathbb{R}^{d}$ with $0 \leq v$. Finally, choose a probability measure $\mu$ with bounded support on the Borel sets of $\mathcal{Y}$ as noise distribution. Choose $\mathfrak{C}$ as the set of all non-negative non-decreasing convex functions on $\mathcal{X}$. Then \hyprefallbutlast\ as well as the structure assumptions on $\mathfrak{C}$ are satisfied.

We are going to show that the function $x\mapsto\mathcal L_j(g)(x)$ is convex non-negative and non-decreasing whenever $g\!: \mathcal{X} \rightarrow \mathbb{R}$ is convex non-negative and non-decreasing. Let $g$ be such a function. That $L_j(g)$ is non-negative is clear from the monotonicity of the Bellman operator and the non-negativity of the running costs $f$. Let $x, \tilde{x} \in \mathbb{R}^{d}$ with $x \leq \tilde{x}$. Then, for all $\gamma \in \Gamma$,
\[
  \int_{\mathcal{Y}} g\left(\tilde{\Psi}(x,y) + B(y)\gamma \right) \mu(\mathrm d y) \leq \int_{\mathcal{Y}} g\left(\tilde{\Psi}(\tilde{x},y) + B(y)\gamma \right) \mu(\mathrm d y)  
\]
by the monotonicity of the integral and since $g$ is non-decreasing and $\tilde{\Psi}(x,y) \leq \tilde{\Psi}(\tilde{x},y)$ for all $y\in \mathcal{Y}$ as the component functions $\tilde{\Psi}_{i}$ are non-decreasing in their first argument. Since $x\mapsto f(j,x,\gamma)$ is non-decreasing for all $\gamma \in \Gamma$, we find that $L_j(g)(x) \leq L_j(g)(\tilde{x})$. Thus, $L_j(g)$ is non-decreasing.

Now, check that the mapping $(x,\gamma) \mapsto \int_{\mathcal{Y}} g\left(\tilde{\Psi}(x,y) + B(y)\gamma \right) \mu(\mathrm d y)$ is convex on $\mathcal{X}\times \Gamma$. Indeed, with $x, \tilde{x} \in \mathbb{R}^{d}$, $\gamma, \tilde{\gamma} \in \Gamma$, $\lambda_1, \lambda_2 \in [0,1]$ such that $\lambda_1 + \lambda_2 = 1$, we have
\[
  \tilde{\Psi}\left(\lambda_1 x + \lambda_2\tilde{x} ,y \right) \leq \lambda_1 \tilde{\Psi}(x,y) + \lambda_2 \tilde{\Psi}(\tilde{x},y)
\]
for every $y\in \mathcal{Y}$ by the convexity of the component functions $\tilde{\Psi}_{i}$ in their first argument. Since $g$ is non-decreasing and convex, we find that
\begin{align*}
  \int_{\mathcal{Y}} & g\left(\tilde{\Psi}\left(\lambda_1 x + \lambda_2\tilde{x},y\right) + B(y)(\lambda_1\gamma + \lambda_2 \tilde{\gamma}) \right) \mu(\mathrm d y) \\
  &\leq \int_{\mathcal{Y}} g\left( \lambda_1 \tilde{\Psi}(x,y) + \lambda_2 \tilde{\Psi}(\tilde{x},y) + \lambda_1 B(y)\gamma + \lambda_2 B(y)\tilde{\gamma} \right) \mu(\mathrm d y) \\
  &\leq \lambda_1 \int_{\mathcal{Y}} g\left(  \tilde{\Psi}(x,y) + B(y)\gamma \right) \mu(dy)  + \lambda_2 \int_{\mathcal{Y}} g\left(\tilde{\Psi}(\tilde{x},y) + B(y)\tilde{\gamma} \right) \mu(\mathrm d y).
\end{align*}
By choice of the running costs $f$, it follows that the mapping
\[
  \mathcal{X}\times \Gamma \ni (x,\gamma) \mapsto f(j,x,\gamma) + \int_{\mathcal{Y}} g\left(\tilde{\Psi}(x,y) + B(y)\gamma \right) \mu(\mathrm d y)
\]
is convex. It is also finite-valued, thanks to the boundedness of the support of $\mu$, and non-negative. Define a function $\phi\!: \mathbb{R}^{d} \times \mathbb{R}^{d_2} \rightarrow [0,\infty]$ by
\[
  \phi(x,\gamma) \doteq \begin{cases}
  f(j,x,\gamma) + \int_{\mathcal{Y}} g\left(\tilde{\Psi}(x,y) + B(y)\gamma \right) \mu(\mathrm d y) &\text{if } \gamma \in \Gamma, \\
  \infty &\text{else.}
  \end{cases}
\]
Then $\phi$ is convex. By Corollary B.2.4.5 in \cite[p.\,98]{HUL2001}, we find that the marginal function $\hat{\phi}$ given by
\[
  \hat{\phi}(x) \doteq \inf\left\{ \phi(x,\gamma) : \gamma \in \mathbb{R}^{d_2} \right\}
\]
is convex on $\mathbb{R}^{d} = \mathcal{X}$. By construction, $\phi(x,\gamma) < \infty$ if and only if $\gamma \in \Gamma$, hence
\begin{multline*}
  \hat{\phi}(x) = \inf\left\{ \phi(x,\gamma) : \gamma \in \Gamma \right\} \\
  = \inf_{\gamma\in \Gamma} \left\{ f(j,x,\gamma) + \int_{\mathcal{Y}} g\left(\tilde{\Psi}(x,y) + B(y)\gamma \right) \mu(\mathrm d y) \right\} = L_j(g)(x),
\end{multline*}
which shows that $x\mapsto L_j(g)(x)$ is convex.

\end{exmpl}

In the above examples, dynamics are time-homogeneous, but the matrices $A$, $B$, $C$ in Example~\ref{ex:linear_convex_models} and the functions $\tilde{\Psi}$, $B(\cdot)$ in Example~\ref{ex:monotone_convex_models} may clearly be taken time-dependent. To allow for a noise distribution $\mu$ with unbounded support (and, in Example~\ref{ex:monotone_convex_models}, an only locally bounded function $B$) one has to require an appropriate growth condition of the convex functions in $\mathfrak{C}$ so that the Bellman operator yields finite-valued functions. For instance, in the case of Example~\ref{ex:linear_convex_models}, if $\mu$ is Gaussian, then $\mathfrak{C}$ may consist of all non-negative convex functions of sub-polynomial growth.

}

\section{The algorithm}\label{sec:algorithm}

In this section we present our algorithm. We assume that at initialization a subsolution is known. The scheme then iterates two steps: the \emph{trajectory simulation} step and the \emph{subsolution updating} step. 
During the simulation step the currently stored subsolution is used to generate a trajectory of the system which, 
roughly speaking, minimizes the subsolution at each time step. During the updating step, the algorithm constructs a new elementary (hyperplane) subsolution. This is possible because the control problem satisfies Assumption \hypref{hyp:bellman_operator_preserves_convexity}. Finally, the algorithm produces a new subsolution by taking at each time the maximum of the hyperplane and the current subsolution. By definition, the sequence of subsolutions thus produced is non-decreasing. We emphasize a subtle point: the update of the subsolution is efficient since it is performed along the trajectories computed in the first step, which are (close to) local minima for the previous subsolution. 

We will denote the hyperplane in  $\mathbb{R}^d\times \mathbb{R}$ which contains the point $(\bar{x},v)$ and with slope $p$ as
\begin{align*}%
\text{Hyp}:(\mathbb{R}^d\times \mathbb{R} \times \mathbb{R}^d)\times \mathbb{R}^d\rightarrow \mathbb{R},\,\qquad \text{Hyp}(\bar{x},v,p)(x)\eqdef v+p\cdot (x-\bar{x}).
\end{align*}%
For a function $g:\mathbb R^d\rightarrow \mathbb R$, we denote by $D^-g(x)$ the \new{\emph{subdifferential}} of $g$ at the point $x$. By definition, this is the set \new{of all \emph{subgradients} of $g$ at $x$, given by}
\begin{align*}%
D^-g(x)\eqdef \{v\in\mathbb R^n: g(y) - g(x) \geq v\cdot (y-x)\quad\forall y\in\mathbb R^n\}.
\end{align*}%
Recall that for any convex function $g:U\rightarrow\mathbb R$ defined on a convex subset $U\subseteq\mathbb R^d$, the \new{subdifferential} $D^-g(x)$ of $g$ is non-empty \new{and compact} at each point $x\in U$.

Let $w^{(0)}$ be the given starting subsolution. 
Without loss of generality, we will assume that the function $x\mapsto w^{(0)}(j,x)$ is convex for every $j$. Note that taking $w^{(0)}\equiv 0$ and $w^{(0)}(T,\cdot)=F(\cdot)$, we get a subsolution $w^{(0)}$ that is also convex. \new{Let $(\Omega, \mathcal{F},\mathbf{P})$ be a probability space carrying a family $(\xi_j^{(n)})_{j\in\mathbb N, n\in \mathbb{N}_0}$ of independent noise variables (hence i.i.d.\ random variables with common law $\mu$) and a sequence $(\zeta^{(n)})_{n\in\mathbb N_0}$ of i.i.d.\ random variables with common law $\nu$, independent from $(\xi_j^{(n)})_{j\in\mathbb N, n\in \mathbb{N}_0}$. For consistency, we always indicate the iteration number of the algorithm with lowercase $n$. Thus, $\zeta^{(n)}$ represents the initial condition of the dynamics at the $n$-th iteration of the algorithm. An important special case is when $\nu=\delta_{x_0}$, where $\delta_{x_0}$ denotes the Dirac measure centered in $x_0\in\mathcal X$. The algorithm is initialized by sampling $\omega\in\Omega$. Then $\zeta^{(n)}(\omega)$ represents the initial state and $(\xi_j^{(n)}(\omega))_{j\in\mathbb N}$ the realizations of the noise for the dynamics at the $n$-th iteration of the algorithm. }
Algorithm \ref{alg:construct_subsolutions} below presents the procedure for constructing the new subsolution $w^{(n+1)}$ starting from the previous subsolution $w^{(n)}$ at a generic step $n+1$, with $n\geq0$.
\begin{algorithm}[!ht]
\caption{Construct subsolution $w^{(n+1)}$ from $w^{(n)}$}
\label{alg:construct_subsolutions}
\begin{algorithmic}
\STATE{\emph{Trajectory simulation}\\ }
\STATE{Set $X_0^n\eqdef \zeta^{(n)}(\omega)$.}
\FOR{$j=0,1,2,\ldots, T-1$}
\STATE{Select a control
  \begin{align*}
  \gamma_j^n(\omega)\in \text{argmin}_{\gamma\in\Gamma}\left\{f(j,X_j^n(\omega),\gamma)+\int_{\mathcal{Y}} w_{\omega}^{(n)}\left(j+1,\Psi(j,X_j^n(\omega),\gamma,y\right) \mu(\mathrm d y)\right\}.
  \end{align*}
  }
\vspace{-1em}
\STATE{Set
  $
  X^n_{j+1}(\omega)\eqdef \Psi\bigl(j,X_j^n(\omega),\gamma_j^n(\omega),\xi^{(n)}_{j+1}(\omega) \bigr).
  $
  }
\ENDFOR
\STATE{\emph{Subsolution updating}
}
\STATE{Select a subgradient $p^{n}_{T} \in D^-F(X_T^n(\omega))$. Set
\begin{align*}%
w_{\omega}^{(n+1)}(T,\cdot)\eqdef \max \bigl\{w_{\omega}^{(n)}(T,\cdot), \text{Hyp}(X_T^n(\omega),F(X_T^n(\omega)),p^{n}_{T})(\cdot) \bigr\}.
\end{align*}%
}
\vspace{-1.5em}
\FOR{$j=T-1,\ldots, 1,0$}
\STATE{Let
\begin{align*}
v_j^n\eqdef \mathcal{L}_j(w_{\omega}^{(n+1)}(j+1,\cdot))(X_j^n(\omega)),\quad
p_j^n\in D^-_x\mathcal{L}_j(w_{\omega}^{(n+1)}(j+1,\cdot))(X_j^n(\omega)).
\end{align*}%
}
\vspace{-1em}
\STATE{Set $w_{\omega}^{(n+1)}(j,\cdot)\eqdef \max \bigl\{w_{\omega}^{(n)}(j,\cdot), \text{Hyp}(X_j^n(\omega),v_j^n,p_j^n)(\cdot) \bigr\}$.}
\ENDFOR

\RETURN $w^{(n+1)}$
\end{algorithmic}
\end{algorithm}

\new{
\begin{rem}[Weak control problem]
The control policy generated at each step $n$ of the algorithm crucially depends on the realization of the noise random variables at all previous steps. Therefore, the control policies  generated at different steps of the algorithm are adapted to different filtrations. This is why we have introduced a weak formulation of our control problem. In fact, the control policy $(\gamma_j^n)_{j\in\mathbb N_0}$ generated by our algorithm at each step $n$ is a stochastic control policy in the sense of Definition \ref{def:stochastic_control_policy}, and thanks to Lemma \ref{lem:value_functions_strong_weak_equal}, our algorithm does indeed produce approximations of the original value function \eqref{eq:value_function}.
\end{rem}
}
\begin{rem}[Choice of controls]
For every measurable function $g(\cdot)$ the function $x\rightarrow \text{argmin}_{\gamma\in\Gamma}\{f(j,x,\gamma)+\int_{\mathcal{Y}} g(\Psi(j,x,\gamma,y)\mu (\mathrm d y)\}$ is a set-valued function. \new{Assumptions \hypref{hyp:system_function_continuous}--\hypref{hyp:control_space_compact}} guarantee that the so-called \emph{measurable selection condition} holds. 
The measurable selection condition states that for each $j=1,\ldots, N$ and for each measurable function $g(\cdot)$ there exists a Borel-measurable function $s:\mathbb{R}^d\rightarrow \Gamma$ such that: 
\begin{align*}
\min_{\gamma\in\Gamma}\big\{f(j,x,\gamma)+& \int_{\mathcal{Y}} g\left(\Psi(j,x,\gamma,y\right) \mu (\mathrm d y)\big\}\\
&=f(j,x,s(x))+\int_{\mathcal{Y}} g\left(\Psi(j,x,s(x),y)\right)\mu (\mathrm d y).
\end{align*}
Measurable selection is a crucial ingredient for the proof of the Dynamic Programming Principle. Moreover, this condition implies that the trajectory simulation step of our algorithm is a measurable procedure.
\end{rem}

\begin{rem}[Parallelization] \label{rem:parallelization}
The trajectory simulation and updating steps are well-suited for parallelization as follows. Take a non-degenerate starting distribution $\nu$. In the trajectory simulation step, sample ${M\gg 1}$ independent starting points $x_1,\ldots,x_M$ according to the distribution $\nu$ and for each of them compute the evolution of the system. Finally, in the updating step construct the hyperplanes along each of the $M$ trajectories and then take the pointwise maximum of the previous subsolution with the $M$ hyperplanes thus constructed. All of our results carry over to this setting.
\end{rem}%

We now prove that at each step the algorithm produces a subsolution for the problem. To avoid complicating the notation, here we omit the dependence on $\omega$.
\begin{proposition}\label{lem:algorithm_produces_subsolutions}%
The sequence of functions $(w^{(n)})_{n\in\mathbb{N}}$ produced by the algorithm is a non-decreasing sequence of convex subsolutions for the control problem.
\end{proposition}%
\begin{proof} Recall that by assumption $w^{(0)}$ is convex. Since the pointwise maximum of convex functions is convex, by induction $w^{(n)}$ is convex for any $n$. Moreover, by construction the sequence of functions $(w^{(n)})_{n\in\mathbb{N}}$ is non-decreasing.
Finally, we prove the subsolution property by induction. By assumption $w^{(0)}$ is a subsolution. We now proceed with the inductive step and assume that $w^{(n)}$ is a subsolution. We will prove that $w^{(n+1)}$ is a subsolution by showing that
\begin{align}\label{eq:subsolution_proof_inductive_step}%
\Lambda_j^{(n+1)}(x)\eqdef\mathcal{L}_j(w^{(n+1)}(j+1,\cdot))(x)\geq w^{(n+1)}(j,x),
\end{align}%
for every $x\in\mathbb{R}^d$ and $j=1,\ldots, N$. By definition
\begin{align*}%
w^{(n+1)}(j,x) = \max\{w^{(n)}(j,\cdot), \text{Hyp}(X_j^{n+1},v_j^{n},p_j^{n})(\cdot)\}(x),
\end{align*}%
and so we will prove \eqref{eq:subsolution_proof_inductive_step} for each of the terms in the maximum. Since the sequence of functions $(w^{(n)})_{n\in\mathbb{N}}$ is non-decreasing we have $w^{(n+1)}(j+1,x)\geq w^{(n)}(j+1,x)$. By monotonicity of the Bellman operator this implies 
\begin{align*}
\Lambda_j^{(n+1)}(x)\geq \mathcal{L}_j (w^{(n)}(j\!+\!1,\cdot))(x).
\end{align*}%
Since $w^{(n)}$ is a subsolution by assumption, $\mathcal{L}_j (w^{(n)}(j\!+\!1,\cdot))(x)\geq w^{(n)}(j,x)$ and so $\Lambda_j^{(n+1)}(x)\geq w^{(n)}(j,x)$. Moreover, by convexity of $x\mapsto\Lambda_j^{(n+1)}(x)$,
\begin{align*}
\Lambda_j^{(n+1)}(x)\geq \text{Hyp}(X_j^{n+1},v_j^{n},p_j^{n})(x)
\end{align*}%
for $x\in\mathbb R^n$. Indeed, recall that  $v_j^n = \Lambda_j^{(n+1)}(X_j^{n+1})$, and $p_j^n = D^-\Lambda_j^{(n+1)}(X_j^{n+1})$.
\end{proof}
In the next sections we will prove that under some additional assumptions the sequence $(w^{(n)}(0,\cdot))_{n\in\mathbb{N}}$ converges from below to $V(0,x)$ for almost all $x\in\supp(\nu)$. Moreover, in the deterministic case, we will prove that the cost associated with the controls generated by the procedure starting in $\nu = \delta_{x_0}$ converges to $V(0,x_0)$, and the controls converge to the optimal ones.

\section{Convergence for deterministic systems} \label{sec:convergence_deterministic_systems}

For the sake of exposition, in this section we will assume that $\nu = \delta_{x_0}$ for some $x_0\in\mathcal X$, so that $X_0^n = x_0$ for all $n\in\mathbb N$. We will carry out the proof with an arbitrary starting distribution $\nu$ in the next section.

\new{
Denote by $\mathcal{X}_j(x_0)$ the set of points which can be reached through the dynamics $\Psi$ at time $j$ starting from $x_0$. Set
\[
  \mathcal{X}(x_0)\eqdef \bigcup_{j\in \{0,\ldots, N\}} \mathcal{X}_j(x_0).
\]
Notice that \hypref{hyp:system_function_continuous} and \hypref{hyp:control_space_compact} imply that $\mathcal{X}(x_0)$ is relatively compact.

\begin{proposition}\label{prop:convergence_deterministic_dynamics}
Assume \emph{\hyprefall}. Then for all $j\in\{0,\ldots,N\}$, the sequence of functions $(w^{(n)}(j,\cdot))_{n\in\mathbb{N}}$ converges uniformly on compacts to a convex function $w(j,\cdot)$. In particular, convergence is uniform on $\mathcal{X}(x_0)$. Moreover, 
\begin{equation*}%
w(0,x_0) = V(0,x_0).
\end{equation*}%
\end{proposition}
}
\begin{proof}
For every $j\in\{0,\ldots, N\}$, the sequence $( w^{(n)}(j,\cdot))_{n\in\mathbb{N}}$ is non-decreasing and bounded from above by $V(j,\cdot)$ and thus admits a pointwise limit. \new{Set
\begin{equation*}%
w(j,x)\eqdef \lim_{n\rightarrow+\infty}w^{(n)}(j,x) = \sup_{n\in\mathbb{N}}w^{(n)}(j,x),\quad j\in \{0,\ldots,N\},\; x \in \mathcal{X}. 
\end{equation*}%
Thus, for every $j\in \{0,\ldots,N\}$, $w(j,\cdot)$ is the finite-valued pointwise limit of a sequence of convex functions. This implies that $w(j,\cdot)$ is itself convex and that convergence is uniform on compact subsets of $\mathcal{X} = \mathbb{R}^{d}$; see Theorem~B.3.1.4 in \cite[p.\,105]{HUL2001}. In particular, since $\mathcal{X}(x_0)$ is relatively compact, $(w^{(n)}(j,\cdot))_{n\in\mathbb{N}}$ converges to $w(j,\cdot)$ uniformly on $\mathcal{X}(x_0)$. 
}

We are left to prove convergence at the starting point $x_0$. We have shown that $w(0,x_0)\leq V(0,x_0)$, and so we are left to prove the opposite inequality. Fix $\varepsilon >0$. Since $w^{(n)}(j,\cdot)$ converges uniformly on $\mathcal{X}(x_0)$, there exists $\bar n(\varepsilon)\in\mathbb{N}$ such that for all $n\geq \bar n(\varepsilon)$, all $j\in \{0,\ldots,N\}$, all $x\in\mathcal{X}(x_0)$, we have
\begin{equation*}%
  w(j,x)-w^{(n)}(j,x)\leq\varepsilon.
\end{equation*}%
By construction 
\begin{equation}\label{eq:update_along_trajectories}%
w^{(n+1)}(j,X_j^n) = \mathcal{L}_j(w^{(n+1)}(j+1,\cdot))(X_j^n),
\end{equation}%
so that for every $j$ we have
\begin{align*}%
w(j,X_j^n)\geq  w^{(n+1)}(j,X_j^n) &= \mathcal{L}_j(w^{(n+1)}(j+1,\cdot))(X_j^n) \\
& \geq \mathcal{L}_j(w^{(n)}(j+1,\cdot))(X_j^n) \\
& = \inf_{\gamma\in\Gamma}\{f(j,X_j^n,\gamma) + w^{(n)}(j+1,\Psi(j,X_j^n,\gamma))\},
\end{align*}%
where, in the third inequality, we used the monotonicity of the Bellman operator. The choice of controls in the trajectory simulation step minimizes the right-hand side of the equation. When $n\geq\bar{n}(\varepsilon)$ we get
\begin{align*}%
w(j,X_j^n) & \geq f(j,X_j^n,\gamma_j^n)+w^{(n)}(j+1,\Psi(j,X_j^n,\gamma_j^n)) \\ 
&\geq f(j,X_j^n,\gamma_j^n)+w(j+1,\Psi(j,X_j^n,\gamma_j^n)) - \varepsilon.
\end{align*}
Iterating the inequality gives
\begin{align}\label{eq:subsol_converges_to_value_function}
w(0,x_0) &\geq \sum_{j=0}^{N-1}f(j,X_j^n,\gamma_j^n)+w(N,X_j^n)-N\varepsilon \notag\\
&\geq \sum_{j=0}^{N-1}f(j,X_j^n,\gamma_j^n)+F(X_j^N)-N\varepsilon \notag\\
& =  J(0,x_0,(\gamma_j^n)) - N\varepsilon \geq  V(0,x_0) -N\varepsilon.
\end{align}%
Letting $\varepsilon\rightarrow0$ concludes the proof.
\end{proof}

\new{
Next, we show that the trajectories generated by the algorithm converge along subsequences and that any limiting trajectory is optimal. In fact, since $\mathcal X(x_0)$ is relatively compact and the control space $\Gamma$ is compact, we can extract from $(X^{n}_j, \gamma^{n}_{j})_{j}$ a converging subsequence. In other words, there exists a sequence $(n_k)_{k\in\mathbb N}$ such that $(X^{n_k}_j, \gamma^{n_k}_{j})_{j}\to(X_j,\gamma_j)_j\in(\cl(\mathcal X)(x_0)\times\Gamma)^N$ as $k\to\infty$. This convergence, together with Assumption~\hypref{hyp:cost_coefficients}, implies that 
\begin{align*}%
J(0,x_{0},(\gamma^{n_k}_{j})_j ) \stackrel{k\to\infty}{\longrightarrow}  J(0,x_{0},(\gamma_{j})_j ).
\end{align*}%
The computations in the proof of Proposition \ref{prop:convergence_deterministic_dynamics} show that for every $\varepsilon>0$, and $n\geq \bar n(\varepsilon)$,
\begin{align}\label{eq:costs_associated_to_generated_trajectories_upper_lower_bounds}%
    V(0,x_0) \leq J(0,x_{0},(\gamma^{n}_{j}) ) \leq V(0,x_0) + N\varepsilon.
\end{align}%
By taking the limit in \eqref{eq:costs_associated_to_generated_trajectories_upper_lower_bounds} first as $n=n_k\to\infty$ and then $\varepsilon\to0$, we get 
\begin{align*}%
J(0,x_{0},(\gamma_{j})) = V(0,x_{0}).
\end{align*}%
We summarize the computations above in the following result. 
\begin{theorem}%
Assume \emph{\hyprefall}. For any subsequence $(n_k)_{k\in\mathbb N}$, there exists a sub-subsequence $(n_{k_i})_{i\in\mathbb N}$ such that the control actions and state trajectories produced by the algorithm converge along the sub-subsequence to the optimal ones. Moreover, the subsolutions converge along the same sub-subsequence to the value function at each point of the optimal trajectory.
\end{theorem}%
\begin{proof}
We are left to prove the last claim. Repeating the computations in \eqref{eq:subsol_converges_to_value_function} for a generic time $m\in\{0,\ldots,N\}$ gives
\begin{align*}%
w(m,X_{m}^n) &\geq \sum_{j=m}^{N-1}f(j,X_j^n,\gamma_j^n)+F(X_N^n) - N\varepsilon = J(m,X^{n}_m,(\gamma_j^n))-N\varepsilon \\
&\geq V(m,X_m^n)-N\varepsilon.
\end{align*}%
Since $(X_j^{n_{k_i}})_j\rightarrow (X _j)_j$ as $i\to\infty$, we get
\begin{align*}%
w(m, X _m)=\lim_{i\rightarrow\infty}w(m,X_m^{n_{k_i}}) &= \liminf_{i\rightarrow\infty}w(m,X_m^{n_{k_i}})\\
&\geq \liminf_{i\rightarrow\infty}V(m,X_m^{n_{k_i}})-N\varepsilon = V(m, X_m) - N\varepsilon.
\end{align*}%
In the first equality above we used the continuity of $x\mapsto w(m,x)$ and in the last inequality the continuity of the value function. Letting $\varepsilon\rightarrow0$, we get $w(m, X_m) \geq V(m, X_m)$. The reverse inequality always holds and hence $w(m, X_m) = V(m, X_m)$.
\end{proof}
}


\section{Convergence for stochastic systems} \label{sec:convergence_stochastic_systems}

\new{ As in Section~\ref{sec:algorithm}, let $(\Omega, \mathcal{F},\mathbf{P})$ be a probability space carrying a family $(\xi_j^{(n)})_{j\in\mathbb N, n\in \mathbb{N}_0}$ of independent noise variables and a sequence $(\zeta^{(n)})_{n\in\mathbb N_0}$ of i.i.d.\ random variables with common law $\nu$, independent from $(\xi_j^{(n)})_{j\in\mathbb N, n\in \mathbb{N}_0}$. Define filtrations $(\mathcal{F}^{n}_j)$, $n\in \mathbb{N}_0$, by setting
\[
  \mathcal{F}^{n}_j \doteq \sigma\left( \zeta^{(k)}, \xi^{(k-1)}_{i}, \xi^{(n)}_{l} : k\in \{0,\ldots,n\},\, i\in \mathbb{N},\, l\in \{1,\ldots,j\} \right),\quad j\in\mathbb N_0,
\]
where $\sigma(\ldots)$ indicates the $\sigma$-algebra generated by the random elements indicated inside the brackets, $\{1,\ldots,j\} = \emptyset$ if $j = 0$, and $\xi^{(-1)}_{i}$ is a void element. Further set
\[
  \mathcal{F}^{n}_{\infty} \doteq \sigma\left( \zeta^{(k)}, \xi^{(k)}_{i} : k\in \{0,\ldots,n\},\, i\in \mathbb{N} \right),\quad n\in\mathbb N_0.
\]
Notice that $\mathcal{F}^{k}_i \subseteq \mathcal{F}^{n}_j \subseteq \mathcal{F}^{n}_{\infty}$ for all $i, j \in \mathbb{N}_{0}$ whenever $k < n$. In particular, $\mathcal{F}^{n}_0$ contains $\mathcal{F}^{n-1}_{\infty}$, which in turn contains the $\sigma$-algebra generated by all preceding filtrations $(\mathcal{F}^{k}_j)$, $k\in \{0,\ldots,n-1\}$. Also notice that $\zeta^{(n)}$ is $\mathcal{F}^{n}_0$-measurable and independent of $\mathcal{F}^{n-1}_{\infty}$, while $w^{(n)}(0,\cdot)$, the subsolution at time zero constructed at the $n$-th iteration of the algorithm (seen as a random element with values in the space of continuous functions on $\mathcal{X}$), is $\mathcal{F}^{n-1}_{\infty}$-measurable, hence independent of $\zeta^{(n)}$.

We will prove that the subsolutions generated by the algorithm converge to the value function at the initial time under general assumptions on the noise distribution and on the initial distribution. The proof proceeds similarly to the deterministic case, however there is a crucial technical difference: when the distribution $\nu$ of the initial states has unbounded support, then $\mathcal X(x_0)$, the set of reachables states, is not necessarily relatively compact. The following condition turns out to be sufficient to carry out the proof.
\begin{hypenv}%
\item\label{hyp:integrability_condition}
Given any sequence  $((\gamma_j^n))_{n\in \mathbb{N}_{0}}$ of stochastic control policies such that, for every $n \in \mathbb{N}_{0}$, $(\gamma_j^n)$ is adapted to the filtration $(\mathcal{F}^{n}_j)$, there exists $p>1$ such that
\[%
  \sup_{n\in\mathbb N_0} \Mean\Big[\Big(\sum_{j=0}^{N-1} f(j,X_j^{n},\gamma_j^{n})+ F(X_N^{n})\Big)^{p}\Big] + \max_{j\in \{0,\ldots,N\}} \Mean[\vert X_j^n\vert^p]<\infty,
\]
where $X^{n}_{j}$, $j\in \{0,\ldots,N\}$, is the controlled state sequence according to \eqref{eq:state_recursion_weak} with $\gamma_j = \gamma^n_j$, $\xi_j = \xi^{(n)}_j$ starting at time $j_0 = 0$ from $\zeta^{(n)}$ (instead of $x_0$). 
\end{hypenv}%
}

Assumption \hypref{hyp:integrability_condition} implies that, given any sequence $((\gamma_j^n))$ of stochastic control policies, the family of random variables $\{X_j^n\}_{j\in\{0,\ldots,N\}, n\in \mathbb{N}_{0}}$ is tight. 

\new{
\begin{rem}
Let $p > 1$, and suppose the cost coefficients $f$, $F$ have sub-polynomial growth of order $p$ in the state variable, that is, for some $K > 0$,
\begin{align*}%
\sup_{\gamma \in \Gamma} \vert f(j,x,\gamma)\vert \vee \vert F(x) \vert\leq K\left(1 + \vert x \vert^p \right) \text{ for all } x\in \mathcal{X},\; j\in \{0,\ldots,T-1\}. 
\end{align*}%
If the initial distribution $\nu$ has finite $p$-th absolute moment, that is, $\int_{\mathcal{X}} |x|^{p} \nu(\mathrm d x) < \infty$, and the noise distribution $\mu$ has bounded support, then Assumption \hypref{hyp:integrability_condition} holds for the control problems from Examples \ref{ex:linear_convex_models} and \ref{ex:monotone_convex_models}. In the case of Example~\ref{ex:linear_convex_models} (linear-convex models), $\mu$ may have unbounded support provided it has finite $p$-th absolute moment as well. In the case of Example~\ref{ex:monotone_convex_models} (monotone-convex models), an additional growth condition on the dynamics would be required when $\mu$ has unbounded support.
\end{rem}

We are now in a position to prove our main convergence result.
\begin{theorem}%
Assume \hypref{hyp:system_function_continuous}--\hypref{hyp:integrability_condition}. Let $\supp(\nu)$ denote the support of the common initial state distribution $\nu$. Then for $\Prb$-almost every $\omega\in\Omega$,
\begin{equation*}%
  \lim_{n\to\infty} w^{(n)}_{\omega}(0,x) = V(0,x) \quad \text{for every } x\in \supp(\nu),
\end{equation*}%
uniformly on compact subsets of $\supp(\nu)$.
\end{theorem}%
}
\begin{proof}
Proceeding as in the proof of Proposition~\ref{prop:convergence_deterministic_dynamics}, we see that for every $\omega\in\Omega$, $w^{(n)}_{\omega}(j,\cdot)$ converges from below to some function $w_{\omega}(j,\cdot)$ uniformly on compact subsets of $\mathcal{X} = \mathbb{R}^d$, and that $w_{\omega}(j,\cdot)$ is convex. 

Fix $\varepsilon >0$. We will show that there exists $\bar{n} = \bar{n}(\varepsilon) \in\mathbb N$ such that the event
\begin{equation*}%
  M({\varepsilon}) \eqdef \left\{\omega\in\Omega:\,w_{\omega}(j,X_j^{\bar n}(\omega)) - w_{\omega}^{(\bar n)}(j,X_j^{\bar n}(\omega))\leq\varepsilon \text{ for all } j\in\{0,\ldots,N\} \right\}
\end{equation*}%
occurs with probability greater than or equal to $1-\varepsilon$. Assumption \hypref{hyp:integrability_condition} implies that the family $(X_j^n)_{j\in\{0,\ldots,N\},n\in\mathbb N_0}$ is tight. Hence there exists a large compact set $K({\varepsilon})$ such that for every $n \in \mathbb{N}_0$,
\begin{equation*}%
  \Prb (M_{1,n}^{\varepsilon}) \geq 1-\frac{\varepsilon}{2} \quad \text{where } M_{1,n}^{\varepsilon} \doteq \left\{ \omega \in \Omega :  X_j^n(\omega) \in K({\varepsilon}) \text{ for all } j\in\{0,\ldots,N\} \right\}.
\end{equation*}%
Since $w^{(n)}_{\omega}(j,\cdot)$ converges uniformly on $K(\varepsilon)$ to $w_{\omega}(j,\cdot)$ for every $\omega\in \Omega$, there exists $n_{\varepsilon}(\omega) \in \mathbb N$ such that $w_{\omega}(j,x) - w_{\omega}^{(n)}(j,x)\leq\varepsilon$ for all $n\geq n_{\varepsilon}(\omega)$, $j\in\{0,\ldots,N\}$, $x\in K(\varepsilon)$. Since
\[
  \Prb\left( \{\omega\in \Omega : n_{\varepsilon}(\omega)\leq L\} \right)\nearrow 1 \text{ as } L\to \infty,
\] 
we can choose $\bar{n} = \bar{n}(\varepsilon)$ such that
\begin{equation*}%
\Prb(M_{2}^{\varepsilon})\geq 1-\frac{\varepsilon}{2}\quad \text{where } M_{2}^{\varepsilon} \eqdef \left\{ \omega\in \Omega : n_{\varepsilon}(\omega)\leq \bar{n} \right\}.
\end{equation*}%
For all $\omega\in M_{1, \bar n}^{\varepsilon} \cap M_2^{\varepsilon}$ it holds that 
\begin{align*}%
w_{\omega}(j,X_j^{\bar n}(\omega)) - w_{\omega}^{(\bar n)}(j,X_j^{\bar n}(\omega))\leq\varepsilon \quad \text{for all } j\in\{0,\ldots,N\}.
\end{align*}%
In other words, we have $M_{1, \bar n}^{\varepsilon} \cap M_2^{\varepsilon} \subseteq M(\varepsilon)$. It follows that
\begin{align*}
  \Prb\left(M(\varepsilon)\right)\geq \Prb \left(M_{1, \bar n}^{\varepsilon} \cap M_2^{\varepsilon} \right) \geq \Prb\left(M_{1, \bar n}^{\varepsilon}\right) - \Prb\left((M_2^{\varepsilon})^{c}\right) \geq 1-\frac{\varepsilon}{2} - \frac{\varepsilon}{2} = 1 - \varepsilon.
\end{align*}%
For any $\omega\in\Omega$, $j\in\{0,\ldots,N-1\}$, we estimate
\begin{align*}
w_{\omega}(j,X_j^{\bar n}(\omega)) &\geq w_{\omega}^{(\bar n+1)}(j,X_j^{\bar n}(\omega)) = \mathcal L _j(w_{\omega}^{(\bar n+1)}(j+1,\cdot))(X_j^{\bar n}(\omega))\\
&\geq \mathcal L _j(w_{\omega}^{(\bar n)}(j+1,\cdot))(X_j^{\bar n}(\omega))\\
&= \inf_{\gamma\in\Gamma}\Big\{f(j,X_j^{\bar n}(\omega),\gamma) + \int_{\mathcal{Y}} w_{\omega}^{(\bar n)}\bigl(j+1,\Psi(j,X_j^{\bar n}(\omega),\gamma,y)\bigr) \mu(\mathrm d y)\Big\}\\
&= f(j,X_j^{\bar n}(\omega),\gamma_j^{\bar n}(\omega)) + \int_{\mathcal{Y}} w_{\omega}^{(\bar n)}\bigl(j+1,\Psi(j,X_j^{\bar n}(\omega),\gamma_j^{\bar n}(\omega),y)\bigr) \mu(\mathrm d y),
\end{align*}
where $\mu$ is the common law of the noise variables $(\xi_j)_{j\in\mathbb N}$. For any $\omega\in M(\varepsilon)$ we can further bound
\begin{multline}\label{eq:recursive_inequality_along_trajectories}%
  w_{\omega}(j,X_j^{\bar n}(\omega)) \geq  f(j,X_j^{\bar n}(\omega),\gamma_j^{\bar n}(\omega)) + \int_{\mathcal{Y}} w_{\omega}^{(\bar n)}\bigl(j+1,\Psi(j,X_j^{\bar n}(\omega),\gamma_j^{\bar n},y)\bigr) \mu(\mathrm d y) \\
  + w_{\omega}(j+1,X_{j+1}^{\bar n}(\omega)) - w_{\omega}^{(\bar n)}(j+1,X_{j+1}^{\bar n}(\omega)) - \varepsilon.
\end{multline}%
By iterating \eqref{eq:recursive_inequality_along_trajectories} we get, for any $\omega\in M(\varepsilon)$,
\begin{multline} \label{eq:iterated_recursive_inequality_along_trajectories}
  w_{\omega}(0,X_0^{\bar n}(\omega)) \geq \left(\sum_{j=0}^{N-1} f(j,X_j^{\bar n},\gamma_j^{\bar n}(\omega)) + F(X_N^{\bar n}(\omega))\right) - N\varepsilon  \\
  + \sum_{j=0}^{N-1} \Bigl( \int_{\mathcal{Y}} w_{\omega}^{(\bar n)}\bigl(j+1,\Psi(j,X_j^{\bar n}(\omega),\gamma_j^{\bar n},y) \bigr)\mu (\mathrm d y)- w_{\omega}^{(\bar n)}(j+1,X_{j+1}^{\bar n}(\omega)) \Bigr).
\end{multline}%
\new{ Now, we multiply both sides in \eqref{eq:iterated_recursive_inequality_along_trajectories} by the indicator function of $M(\varepsilon)$ and take expectation. We analyze the resulting first and third term separately. For the first term, write
\begin{multline} \label{eq:expected_value_iterated_recursive_inequality_along_trajectories}%
  \Mean \Bigl[\Big(\sum_{j=0}^{N-1} f(j,X_j^{\bar n},\gamma_j^{\bar n}) + F(X_N^{\bar n})\Big)\cdot \mathbf 1 _{M(\varepsilon)} \Bigr] \\
  = \Mean \Bigl[\sum_{j=0}^{N-1} f(j,X_j^{\bar n},\gamma_j^{\bar n})+ F(X_N^{\bar n})\Bigr] - \Mean\Bigl[ \Big(\sum_{j=0}^{N-1} f(j,X_j^{\bar n},\gamma_j^{\bar n})+ F(X_N^{\bar n})\Big)\cdot \mathbf 1 _{M(\varepsilon)^c}\Bigr].
\end{multline}%
We lower bound the second term in \eqref{eq:expected_value_iterated_recursive_inequality_along_trajectories} using H\"older's inequality as
\begin{multline*}%
  -\Mean\Bigl[\Big(\sum_{j=0}^{N-1} f(j,X_j^{\bar n},\gamma_j^{\bar n})+ F(X_N^{\bar n})\Big)\cdot \mathbf 1 _{M(\varepsilon)^c}\Bigr] \\
  \geq -\Mean\Bigl[\Big(\sum_{j=0}^{N-1} f(j,X_j^{\bar n},\gamma_j^{\bar n})+ F(X_N^{\bar n})\Big)^{p}\Bigr]^{1/p}\cdot \Prb\left(M(\varepsilon)^c\right)^{(p-1)/p},
\end{multline*}%
where $p\in (1,\infty)$ is given by Assumption~\hypref{hyp:integrability_condition}. By \hypref{hyp:integrability_condition}, the term on the left-hand side is bounded, while $\Prb(M(\varepsilon)^c) \leq \varepsilon$.

Next, we discuss the third term on the right-hand side of \eqref{eq:iterated_recursive_inequality_along_trajectories}. By construction,
\[
  w_{\omega}^{(\bar n)}\bigl(j+1,X_{j+1}^{\bar n}(\omega) \bigr) = w_{\omega}^{(\bar n)} \bigl(j+1,\Psi(j,X_j^{\bar n}(\omega),\gamma_j^{\bar n}(\omega),\xi^{(\bar n)}_{j+1}(\omega)) \bigr) \text{ for all } \omega \in \Omega,
\]
where $\xi^{(\bar n)}_{j+1}$ has distribution $\mu$ and is independent of $w^{(\bar n)}(j+1,\Psi(j,X_j^{\bar n},\gamma_j^{\bar n},\cdot))$, seen as a function-valued random element. It follows that
\[
  \Mean\Bigl[ \int_{\mathcal{Y}} w^{(\bar n)}\bigl(j+1,\Psi(j,X_j^{\bar n},\gamma_j^{\bar n},y) \bigr) \mu (\mathrm d y)- w^{(\bar n)}\bigl(j+1,X_{j+1}^{\bar n}\bigr) \Bigr] = 0,
\]
hence, since $0 \leq w^{(\bar n)}_{\omega}(j+1,\cdot) \leq V(j+1,\cdot)$ for all $\omega \in \Omega$,
\begin{align*}%
  \Mean\Bigl[ &\Big(\int_{\mathcal{Y}} w^{(\bar n)}\bigl(j+1,\Psi(j,X_j^{\bar n},\gamma_j^{\bar n},y) \bigr) \mu (\mathrm d y)- w^{(\bar n)}\bigl(j+1,X_{j+1}^{\bar n}\bigr) \Big)\cdot \mathbf 1 _{M(\varepsilon)}\Bigr] \\
  &= -\Mean\Bigl[ \Big(\int_{\mathcal{Y}} w^{(\bar n)}\bigl(j+1,\Psi(j,X_j^{\bar n}, \gamma_j^{\bar n},y) \bigr) \mu (\mathrm d y) - w^{(\bar n)}(j+1,X_{j+1}^{\bar n}) \Big)\cdot \mathbf 1 _{M(\varepsilon)^c}\Bigr] \\
  &\geq -\Mean\Big[ \int_{\mathcal{Y}} V\bigl(j+1, \Psi(j,X_j^{\bar n}, \gamma_j^{\bar n},y)\bigr) \mu (\mathrm d y)\cdot \mathbf 1 _{M(\varepsilon)^c}\Big] \\
  &\geq -\Mean\Big[ \Big(\int_{\mathcal{Y}} V\bigl(j+1,\Psi(j,X_j^{\bar n},\gamma_j^{\bar n},y)\bigr) \mu (\mathrm d y)\Big)^p\Big]^{1/p} \cdot \Prb(M(\varepsilon)^c)^{(p-1)/p},
\end{align*}%
where the last inequality above is again a consequence of H\"older's inequality with $p\in(1,\infty)$ as given by Assumption \hypref{hyp:integrability_condition}. By Jensen's inequality, the definition of the dynamics, and the definition of the value function, we have
\begin{align*}%
  \Mean\Big[ \Big(\int_{\mathcal{Y}} V\bigl(j+1,\Psi(j,X_j^{\bar n},\gamma_j^{\bar n},y)\bigr) \mu (\mathrm d y)\Big)^p\Big] &\leq \Mean\Big[ V\bigl(j+1,\Psi(j,X_j^{\bar n},\gamma_j^{\bar n},\xi_{j+1}^{\bar n})\bigr)^p\Big] \\
%
%
  &\leq \Big[\Big(\sum_{i=j+1}^{N-1} f\bigl(i,X_i^{\bar n},\gamma_i^{\bar n}\bigr) + F\bigl(X_N^{\bar n}\bigr) \Big)^p\Big],
\end{align*}%
and the right-hand side above is finite by Assumption \hypref{hyp:integrability_condition}. To summarize, we have shown that, for some finite constant $C > 0$ not depending on $\varepsilon$,
\begin{align*}%
  \Mean \left[ w\bigl(0,X_{0}^{\bar n} \bigr)\cdot \mathbf{1}_{M(\varepsilon)} \right] &\geq \Mean\Big[  \sum_{j=0}^{N-1} f\bigl(j,X_j^{\bar n},\gamma_j^{\bar n}\bigr) + F\bigl(X_N^{\bar n}\bigr)  \Big] -N\varepsilon - C\varepsilon \\
  &= \int_{\mathcal{X}} J\bigl(0,x,(\gamma_j^{\bar n, x}) \bigr)\nu(\mathrm d x) \;- (N+C)\varepsilon,
\end{align*}
where $(\gamma_j^{\bar n, x})$ denotes the stochastic control policy (in the sense of Definition~\ref{def:stochastic_control_policy}) derived from $(\gamma_j^{\bar n})$ by replacing the underlying probability measure $\Prb$ with $\Prb( \,\cdot\, | X_{0}^{\bar n} = x)$, that is, with the probability kernel corresponding to (a version of) the regular conditional distribution of the identity map on $\Omega$ given $X_{0}^{\bar n}$ evaluated in $x \in \mathcal{X}$. Also recall that $X_{0}^{\bar n}$ has distribution $\nu$. It follows that
\begin{align} \label{eq:wzero_estimate}
  \Mean \left[ w\bigl(0,X_{0}^{\bar n} \bigr)\cdot \mathbf{1}_{M(\varepsilon)} \right] &\geq  \int_{\mathcal{X}} J\bigl(0,x,(\gamma_j^{\bar n, x}) \bigr) \nu(\mathrm d x) \;- (N+C)\varepsilon \\
  &\geq \int_{\mathcal{X}} \inf_{(\gamma_j)\in\mathcal R} J\bigl(0,x,(\gamma_j) \bigr) \nu(\mathrm d x) \;- (N+C)\varepsilon \notag \\ 
&=  \int_{\mathcal{X}} V(0,x) \nu(\mathrm d x) \;- (N+C)\varepsilon, \notag
\end{align}%
where the equality in the last line above is due to Lemma \ref{lem:value_functions_strong_weak_equal}. Recall that $w^{(\bar n)}(0,\cdot)$ is $\mathcal{F}_{\infty}^{\bar n -1}$-measurable, hence independent of $X^{\bar n}_{0} = \zeta^{(\bar n)}$. Using this, the definition of $M(\epsilon)$, the fact that $w_{\omega}(0,\cdot) \geq w^{(\bar n)}_{\omega}(0,\cdot) \geq 0$ for all $\omega \in \Omega$, as well as \eqref{eq:wzero_estimate}, we find that
\begin{align*}
  \Mean \left[ \int_{\mathcal{X}} w(0,x) \nu(\mathrm d x) \right] &\geq \Mean \left[ \int_{\mathcal{X}} w^{(\bar n)}(0,x) \nu(\mathrm d x) \right] = \Mean \left[ w^{(\bar n)}\bigl(0,X_{0}^{\bar n} \bigr) \right]\\
  &\geq \Mean \left[ w^{(\bar n)}\bigl(0,X_{0}^{\bar n} \bigr) \cdot \mathbf{1}_{M(\varepsilon)} \right] \\
  &\geq \Mean \left[ w\bigl(0,X_{0}^{\bar n} \bigr)\cdot \mathbf{1}_{M(\varepsilon)} \right] - \varepsilon \\
&\geq  \int_{\mathcal{X}} V(0,x) \nu(\mathrm d x) \;- (N+1 + C)\varepsilon.
\end{align*}
Since $V(0,\cdot) \geq w_{\omega}(0,\cdot)$ for all $\omega \in \Omega$, it follows that
\begin{multline*}
  \Mean \left[ \int_{\mathcal{X}} \bigl| V(0,x) - w(0,x) \bigr|\, \nu(\mathrm d x) \right] = \int_{\mathcal{X}} V(0,x) \nu(\mathrm d x) - \Mean \left[ \int_{\mathcal{X}} w(0,x) \nu(\mathrm d x) \right] \\
  \leq (N+1 + C)\varepsilon,
\end{multline*}
hence, by letting $\varepsilon \to 0$,
\begin{equation} \label{eq:V_equals_w_Pnuas}
  \Mean \left[ \int_{\mathcal{X}} \bigl| V(0,x) - w(0,x) \bigr|\, \nu(\mathrm d x) \right] = 0.
\end{equation}

From \eqref{eq:V_equals_w_Pnuas} we are going to deduce by contradiction that for $\Prb$-almost every $\omega \in \Omega$,
\begin{equation} \label{eq:convergence_claim}
  V(0,x) = w_{\omega}(0,x) \text{ for all } x \in \supp(\nu).
\end{equation}
Suppose on the contrary that there are $\tilde x\in\mathbb\supp(\nu)$ and $A \in \mathcal{F}$ such that $\Prb(A) > 0$ and $w_{\omega}(0,\tilde x) \neq V(0,\tilde x)$ for every $\omega \in A$. This would imply, since $V(0,\cdot) \geq w_{\omega}(0,\cdot)$ and $V(0,\cdot)$, $w_{\omega}(0,\cdot)$ are continuous (being convex and finite-valued) for every $\omega \in \Omega$, that
\[
  A \subseteq \bigcup_{n,m\in \mathbb{N}} A_{n,m} \text{ with } A_{n,m} \doteq \left\{ \omega \in \Omega : V(0,x) - w_{\omega}(0,x) \geq \frac{1}{m} \text{ for all } x\in B_{\frac{1}{n}}(\tilde x) \right\},
\]
where $B_{r}(\tilde x)$ denotes the open ball in $\mathcal{X}$ of radius $r$ centered in $\tilde x$. If $\Prb(A) > 0$, there would hence exist $\tilde n, \tilde m \in \mathbb{N}$ such that $\Prb(A_{\tilde n, \tilde m}) > 0$. But then, since $\nu(B_{\frac{1}{\tilde n}}(\tilde x)) > 0$ as $\tilde x\in\mathbb\supp(\nu)$ by definition of the support of a probability measure, we would have
\[
  \Mean \left[ \int_{\mathcal{X}} \bigl| V(0,x) - w(0,x) \bigr|\, \nu(\mathrm d x) \right] \geq \frac{1}{\tilde m}\cdot \Prb(A_{\tilde n, \tilde m})\cdot \nu(B_{\frac{1}{\tilde n}}(\tilde x)) > 0,
\]
which contradicts \eqref{eq:V_equals_w_Pnuas}. We conclude that \eqref{eq:convergence_claim} holds.
}

\end{proof}

\section{Numerical experiments}\label{sec:numerical_experiments}

For the numerical tests, we will use linear-convex models 
derived by time discretization from continuous time problems of the form:
\begin{align*}
  &\text{minimize } \Mean\left[ \int_{0}^{T} \left( c|u(t)|^{2} + \bar{f}(X(t)) \right)dt + F(X(T) )\right] \\
  &\text{subject to } dX(t) = \left(A X(t) + B u(t)\right)dt + C dW(t),\quad X(0) = x_{0}, \\
  &\text{over } \bar{B}(0,r)\text{-valued adapted processes }u,
\end{align*}
where $c \geq 0$, $\bar{B}(0,r)$ is the closed ball of radius $r$ centered at the origin of $\mathbb{R}^{d_{2}}$, $W$ is a $d_{1}$-dimensional standard Wiener process, $A$, $B$, $C$ are matrices of dimensions $d\times d$, $d\times d_2$, and $d\times d_1$, respectively, and $\bar{f}$, $F$ are convex functions. If the matrix $C$ is equal to zero, then the above stochastic optimal control problem reduces to the deterministic problem:
\begin{align*}
  &\text{minimize } \int_{0}^{T} \left( c|u(t)|^{2} + \bar{f}(x(t)) \right)dt + F(x(T)) \\
  &\text{subject to } dx(t) = \left(A x(t) + B u(t)\right)dt,\quad x(0) = x_{0}, \\
  &\text{over } \bar{B}(0,r)\text{-valued measurable functions }u.
\end{align*}

Let $h > 0$ be such that $N\doteq T / h$ is an integer. A standard Euler-type time discretization with step size $h$ yields the stochastic discrete time problem 
\begin{align*}
  &\text{minimize } J(0,x_{0},(\gamma_{j})) \doteq \Mean\left[ \sum_{j=0}^{N-1} f(j,X_{j},\gamma_{j}) + F(X_{N}) \right] \\
  &\text{subject to } X_{j+1}\doteq \Psi(j,X_{j},\gamma_{j},\xi_{j+1}),\quad X_0 = x_{0}, \\
  &\text{over adapted } \Gamma\text{-valued control sequences }(\gamma_{j}),
\end{align*}
where $\Gamma \doteq \bar{B}(0,r)$, the terminal costs $F$ are as above,
\begin{align*}
  f(j,x,\gamma) &\doteq \left( c|\gamma|^{2} + \bar{f}(x) \right)h, \\
  \Psi(j,x,\gamma,y) &\doteq x + \left(Ax + B\gamma\right)h + \sqrt{h}C y,  
\end{align*}
and $\xi_{1}, \xi_{2}, \ldots$ are i.i.d.\ $\mathbb{R}^{d_1}$-valued random variables with common distribution $\mu$. As noise distribution $\mu$, one could choose the $d_{1}$-variate standard normal distribution. For ease of implementation, we will take $\mu \doteq \otimes^{d_{1}} \mathrm{Rad}(1/2)$, the $d_{1}$-fold product of the Rademacher distribution concentrated in $1$ and $-1$ with equal probability, in accordance with the simple random walk approximation of the standard Wiener process. If the matrix $C$ equals zero, then we have a deterministic optimal control problem.

\new{We present five numerical examples of the above type, two deterministic and three stochastic.} The time horizon $T$ will be fixed at $T = 2$. The time step $h$ will be less than or equal to $1/100$ so that at each iteration of our procedure there will be at least $200$ steps in time. In the actual implementation, when computing optimal control actions at any step in discrete time, we will evaluate the Hamiltonian of the continuous time problem, exploiting the fact that, given any subgradient $p \in \mathbb{R}^{d}$, the argmin of the mapping
\[
  \gamma \mapsto c|\gamma|^{2} + \langle p, B\gamma \rangle \text{ over } \gamma \in \bar{B}(0,r)
\]
can be calculated analytically. This ``Hamiltonian approximation'' works well provided the time step $h$ is sufficiently small.

The cost coefficients $F$, $\bar{f}$ will be constant or (affine-)quadratic. Our example problems might thus appear to be classical linear-quadratic problems. However, due to the fact that the space of control actions is bounded, the corresponding value function will in general not be of linear-quadratic type.

The numerical tests were done using a 2015 Fujitsu W570 Linux workstation with Intel Xeon \mbox{E3-1271v3} \mbox{CPU} at 3.60GHz. We implemented the iterative procedure in \mbox{C++} using its standard template library. No parts of the procedure were parallelized, although this would certainly be possible at different points. For instance, the evaluation of functions given as pointwise maxima of hyperplanes could be carried out in parallel.

\subsection{Deterministic case}\label{sec:deterministic_numerical_experiments}

Given an initial state $x_{0}\in \mathbb{R}^{d}$, the optimal control problem is to minimize
\[
  \sum_{j=0}^{N-1} \left( c|\gamma_{j}|^{2} + \bar{f}(x_{j}) \right)h + F(x_{N}),\quad \text{where }N \doteq T/h,
\]
over $\bar{B}(0,r)$-valued control sequences $(\gamma_{j})$ subject to the dynamics
\[
  x_{j+1} = x_{j} + \left(A x_{j} + B \gamma_{j}\right)h, \quad j\in \{0,\ldots,N-1\}.
\]

\begin{example}[5-dimensional deterministic] \label{CDSysLinQuad5D}
Dimensions $d = d_2 = 5$, control actions in the closed ball $\Gamma \doteq \bar{B}(0,1)$,
\begin{align*}
  &A = 0,& &B = \Id,& &\bar{f} \equiv 0,& & F(x) \doteq 1 + \|x\|^{2}.&
\end{align*}
Table~\ref{TabCDSysLinQuad5D} summarizes data of a numerical test with time horizon $T = 2$ and initial state $x_{0}\doteq (1, -\sqrt{3}, 2, 1, -1)$ starting from the constant subsolution $w^{(0)} \equiv 0$. The first two columns there give the time discretization parameter $h$ and the cost coefficient $c$ for the quadratic control running costs, respectively. The third column shows the number of iterations of the algorithm, while the fourth column reports the value of the subsolution at initial state computed after the indicated number of iterations. The difference between this value from below and the value from above, namely the cost associated with the trajectory obtained from the feedback strategy induced by the last subsolution, is given in column five. The last column shows the computing time in seconds.

\begin{table} 
\begin{center}
  \begin{tabular}{l|c|c|c|c|c}
    $h$ & $c$ & \#iter. & value below & $\Delta_{(above - below)}$ & \new{CPU time}  \\
    \hline
    $0.01$ & $0.0$ & 20 & 2.35 & $-5.46\cdot 10^{-14}$ & 0.006s \\
    $0.01$ & $0.0$ & 50 & 2.35 & $-5.46\cdot 10^{-14}$ & 0.027s \\
    $0.0001$ & $0.0$ & 20 & 2.35 & $-2.44\cdot 10^{-12}$ & 0.472s \\
    $0.0001$ & $0.0$ & 50 & 2.35 & $-2.44\cdot 10^{-12}$ & 2.421s \\[1ex]
    $0.01$ & $0.5$ & 20 & 3.35 & $-1.38\cdot 10^{-14}$ & 0.006s \\
    $0.01$ & $0.5$ & 50 & 3.35 & $-1.38\cdot 10^{-14}$ & 0.023s \\
    $0.0001$ & $0.5$ & 20 & 3.35 & $-2.08\cdot 10^{-13}$ & 0.464s \\
    $0.0001$ & $0.5$ & 50 & 3.35 & $-2.08\cdot 10^{-13}$ & 2.424s \\[1ex]
    $0.01$ & $1.5$ & 20 & 5.29 & $1.78\cdot 10^{-4}$ & 0.008s \\
    $0.01$ & $1.5$ & 50 & 5.29 & $1.78\cdot 10^{-4}$ & 0.028s \\
    $0.0001$ & $1.5$ & 20 & 5.29 & $3.43\cdot 10^{-9}$ & 0.488s \\
    $0.0001$ & $1.5$ & 50 & 5.29 & $3.43\cdot 10^{-9}$ & 2.463s \\
  \end{tabular}
\end{center}
\caption{Numerical test for Example~\ref{CDSysLinQuad5D} (5-dimensional deterministic).}\label{TabCDSysLinQuad5D}
\end{table}

\end{example}

From Table~\ref{TabCDSysLinQuad5D} and Figure~\ref{FigDetConvergence}, we see that convergence at initial state is fast during the first iterations. Indeed, twenty iterations are enough to reach a very small error, after that there is hardly any improvement.
  
Computation time is expected to scale (sub-)linearly in the number of time steps, while it will scale super-linearly in the number of iterations (because of the way subsolutions are represented). This can be observed in the data. Notice that with $h = 0.01$ there are $200$ steps in time, while a choice of $h = 0.0001$ corresponds to $20000$ time steps at each iteration.
  
For $c \in \{0.0, 0.5\}$, the error (both absolute and relative) in Table~\ref{TabCDSysLinQuad5D} is close to machine precision. In theory, the difference between value from above and value from below should be non-negative. The minus sign is likely caused by rounding errors, which however do not build up. For $c = 1.5$, we still have a small error, with relative error at $0.0034\%$ when $h = 0.01$, by orders of magnitude lower when $h = 0.0001$. The reason for this improvement when refining the time discretization is due to the ``Hamiltonian approximation'' we use in computing control actions (static optimization). The difference between value from above and value from below is non-negative, as expected from theory.

Notice that our algorithm does not take advantage of the affine-quadratic form of the terminal costs in Example~\ref{CDSysLinQuad5D}. Convergence for $L^{1}$-costs, for instance, would be even faster, as the corresponding value functions are more easily approximated through maxima of hyperplanes.

\begin{figure}[t] 
  \noindent\makebox[\textwidth]{
    \includestandalone[width=\textwidth, mode=image|tex]{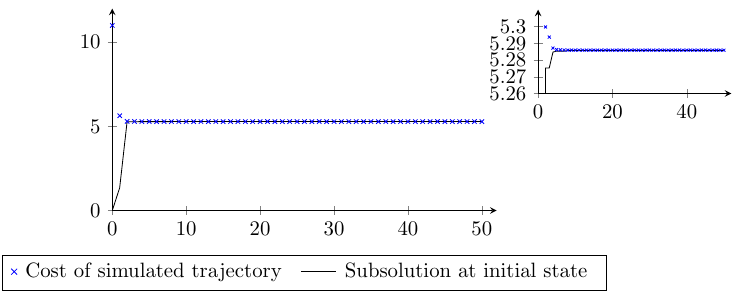}
  }
  \caption{Convergence at initial state for Example~\ref{CDSysLinQuad5D} (5-dimensional deterministic) with $h = 0.01$, $c = 1.5$.}\label{FigDetConvergence}
\end{figure}

\new{
To get a more precise idea on how computation time grows with the dimension of state and control space, we consider a simple $N$-particle system with mean field interaction. It models the friction-less motion in space of $N$ particles with acceleration subject to control and to a force coming from the average position of all particles. Running costs are quadratic in the control, while the quadratic terminal costs are minimal when the positions of the first half of the particles (more precisely, $\lfloor N/2 \rfloor$) are at $(1,1,1)$, and those of the second half at $(-1,-1,-1)$. Initial positions are chosen randomly and independently according to the uniform distribution on the three-dimensional sphere with radius $5$, while initial velocities are equal to zero.  

\begin{example}[$N$-particle deterministic] \label{CDSysNParticleMF}
  Dimensions $d = 6N$, $d_2 = 3N$, where $N\in \mathbb{N}$ number of particles, control actions in the $d_2$-dimensional closed ball $\Gamma \doteq \bar{B}(0,2)$, 
  \begin{align*}
  &A = (a_{kl}) \text{ with } a_{kl} = \begin{cases}
  1 &\text{if } (k\! \mod 6) \in \{1,2,3\} \text{ and } l=k+3, \\
  -\frac{1}{4N} &\text{if } (k\! \mod 6) \in \{4,5,0\} \text{ and } (|l-k|\! \mod 6) = 3, \\
  0 &\text{else},
  \end{cases} \\
  & B = (b_{kl}) \text{ with } b_{kl} = \begin{cases}
  1 &\text{if } k = 6i+3+j \text{ and } l=3i+j \\
  &\text{for some } i\in \{0,\ldots,N-1\},\; j\in \{1,2,3\},\\
  0 &\text{else},
  \end{cases} \\
  &\bar{f} \equiv 0,\; c = \frac{1}{2N}, \quad F(x) \doteq 1 + \frac{1}{N} \sum_{j=1}^{3} \left( \sum_{i=0}^{\lfloor N/2 \rfloor - 1}  (x_{6i+j}-1)^{2} + \sum_{i=\lfloor N/2\rfloor}^{N-1} (x_{6i+j}+1)^{2} \right).
  \end{align*}
  Table~\ref{TabCDNParticleMF} summarizes data of a numerical test with time horizon $T = 2$ and randomly chosen initial configuration $x_{0}$ such that
  \begin{align*}
    & \sum_{j=1}^{3} x_{0,6i+j}^2 = 25,&  &\sum_{j=1}^{3} x_{0,6i+3+j}^2 = 0, & &i\in \{0,\ldots,N-1\}.&
  \end{align*}
  The time discretization parameter is set to $h = 0.0001$, corresponding to $20000$ time steps at each iteration. Computations were performed with $20$ iterations at each run as the number of particles $N$ goes through the set $\{2, 5, 10, 20, 40, 50, 80\}$. The first column of Table~\ref{TabCDNParticleMF} gives the particle number $N$, while the second and third column show the corresponding dimensions of the state space $\mathcal{X} = \mathbb{R}^{d}$ and the control space $\Gamma \doteq \bar{B}(0,2) \subset \mathbb{R}^{d_2}$. The fourth column reports the value of the subsolution at initial state computed after $20$ iterations. The difference between this value from below and the value from above, namely the cost associated with the trajectory obtained from the feedback strategy induced by the last subsolution, is given in column five. The second but last column shows the computing time in seconds, while the last column shows values of the function $z \mapsto \frac{0.963}{4}\cdot z^2$ evaluated in $N$. 
  
\end{example}

\begin{table} 
  \begin{center}
    \begin{tabular}{l|c|c|c|c|r|r}
      $N$ & $d$ & $d_2$ & value below & $\Delta_{(above - below)}$ & CPU time & $\frac{0.963}{4}\cdot N^2$\\
      \hline
      2 & 12 & 6 & 7.58 & $5.87\cdot 10^{-10}$ & 0.963s & 0.963 \\
      5 & 30 & 15 & 11.31 & $2.80\cdot 10^{-10}$ & 3.357s & 6.019 \\
      10 & 60 & 30 & 16.74 & $1.31\cdot 10^{-11}$ & 12.910s & 24.075 \\
      20 & 120 & 60 & 18.34 & $2.10\cdot 10^{-12}$ & 51.085s & 96.300 \\
      40 & 240 & 120 & 24.38 & $-4.12\cdot 10^{-11}$ & 220.427s & 385.200 \\
      50 & 300 & 150 & 24.11 & $-2.14\cdot 10^{-11}$ & 338.211s & 601.875 \\
      80 & 480 & 240 & 22.40 & $-2.59\cdot 10^{-11}$ & 897.943s & 1540.800 \\[1ex]
      80 & 480 & 240 & 22.40 & $-2.59\cdot 10^{-11}$ & 899.017s & 1540.800\\
    \end{tabular}
  \end{center}
  \caption{Numerical test for Example~\ref{CDSysNParticleMF} ($N$-particle deterministic).}\label{TabCDNParticleMF}
\end{table}

From Table~\ref{TabCDNParticleMF} we see that computation time grows sub-quadratically in the number of particles and, equivalently, in the state and control space dimensions. We have re-scaled running and terminal costs in accordance with the mean field interaction in the dynamics. Minimal costs are thus expected to converge to some optimal limit costs as $N$ tends to infinity, in a way analogous to the law of large numbers. Randomness enters the system through the choice of the initial configuration. The difference between the value of the subsolution at initial state computed after $20$ iterations (value from below) and the cost associated with the trajectory obtained from the feedback strategy induced by the last subsolution (value from above) approaches machine precision. The instances where it is negative ($N \in \{40, 50, 80\}$) are likely due to rounding errors. Those errors do not build up nor do they seem to grow with the number of particles.

The last row in Table~\ref{TabCDNParticleMF} shows a second run with $N = 80$, starting from the same (randomly sampled) initial configuration as for the one reported in the first row with $N = 80$. While for that run, and the others reported above, the initial configuration stays the same for all 20 iterations, in this run every other iteration uses a perturbed initial configuration, which is obtained by adding to each component of the original configuration (position as well as velocity entries) the outcome of sampling from a sequence of i.i.d.\ Gaussian random variables with mean zero and variance equal to $0.25$. Figure~\ref{FigDetNParticlePerturbed} shows the value of the current subsolution and the cost of the simulated trajectory under the Markov feedback strategy induced by that subsolution as a function of the iteration step. We see quick convergence for even step numbers, namely those where the original initial configuration is used. For odd step numbers, we do not have, nor do we expect to have, fast convergence of values, as those iterations are computed with respect to ever new initial configurations, obtained by independent perturbations of the original one. Better performance of those feedback controls could be obtained by letting more than one iteration start from any of the perturbed configurations. Still, Figure~\ref{FigDetNParticlePerturbed} can be seen as an illustration of the claim that the subsolutions computed by our algorithm yield useful sub-optimal feedback controls also for previously unseen initial configurations.

\begin{figure}[t] 
\noindent\makebox[\textwidth]{
  \includestandalone[width=\textwidth, mode=image|tex]{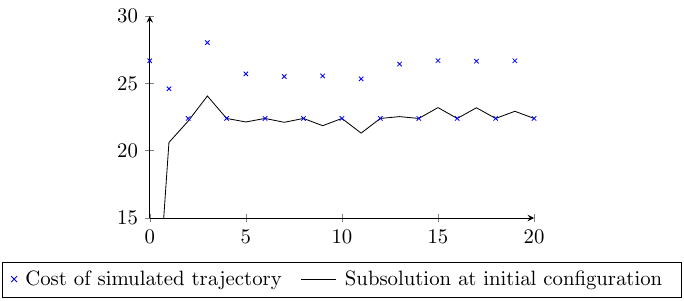}
  }
  \caption{Values at initial configuration for Example~\ref{CDSysNParticleMF} ($N$-particle deterministic) with $N = 80$; every other iteration (odd numbers) started from a perturbed initial configuration.}\label{FigDetNParticlePerturbed}
\end{figure}

}

\subsection{Stochastic case}

Given an initial state $X_{0} = x_{0}\in \mathbb{R}^{d}$, the optimal control problem is to minimize
\[
  \Mean\left[ \sum_{j=0}^{N-1} \left( c|\gamma_{j}|^{2} + \bar{f}(X_{j}) \right)h + F(X_{N})\right],\quad \text{where }N \doteq T/h,
\]
over $\bar{B}(0,r)$-valued adapted control processes $(\gamma_{j})$ subject to the dynamics
\[
  X_{j+1} = X_{j} + \left(A X_{j} + B \gamma_{j}\right)h + \sqrt{h}C\xi_{j+1}, \quad j\in \{0,\ldots,N-1\},
\]
where $\xi_{1}, \xi_{2}, \ldots$ are i.i.d.\ $\mathbb{R}^{d_1}$-valued random variables with common distribution $\mu \doteq \otimes^{d_{1}} \mathrm{Rad}(1/2)$.

\begin{example}[5-dimensional stochastic] \label{CSSysLinQuad5D}
Dimensions $d = d_{1} = d_2 = 5$, control actions in the closed ball $\Gamma \doteq \bar{B}(0,1)$,
\begin{align*}
  &A = 0,& &B = \Id,&  &C = 0.25\cdot \Id,& &\bar{f} \equiv 0,& & F(x) \doteq 1 + \|x\|^{2}.&
\end{align*}
Table~\ref{TabCSSysLinQuad5D} summarizes data of a numerical test with discretization parameter $h = 0.01$, time horizon $T = 2$, and initial state $x_{0}\doteq (1, -\sqrt{3}, 2, 1, -1)$ starting from the constant subsolution $w^{(0)} \equiv 0$. The first column there gives the cost coefficient $c$ for the quadratic control running costs. The second column shows the number of iterations of the algorithm, while the third column reports the value of the subsolution at initial state computed after the indicated number of iterations. The difference between this value from below and an estimated value from above, which is computed through Monte Carlo simulation as the average cost associated with $10^{4}$ trajectories generated according to the feedback strategy induced by the last subsolution, is given in column four. Column five shows the corresponding relative error: (estimated value from above minus value from below) divided by value from below. The computing time in seconds appears in column six. For our simple example, an optimal feedback strategy can be calculated analytically. The difference between a cost estimate corresponding to that optimal feedback strategy (again computed by Monte Carlo simulation) and the estimated value from above is given in the last column.

\begin{table} 
\begin{center}
  \begin{tabular}{c|c|c|c|c|c|c}
    $c$ & \#iter. & value below & $\Delta_{(above - below)}$ & relative $\Delta$ & CPU & $\Delta_{(above - opt)}$ \\
    \hline
    $0.0$ & 20 & 2.15 & 0.751 & 34.93\% & 0.126s & 0.1220 \\
    $0.0$ & 50 & 2.27 & 0.593 & 26.12\% & 0.434s & 0.0783 \\
    $0.0$ & 100 & 2.28 & 0.562 & 24.65\% & 1.440s & 0.0600 \\
    $0.0$ & 200 & 2.29 & 0.533 & 23.28\% & 5.446s & 0.0385 \\[1ex]
    $0.5$ & 20 & 3.15 & 0.751 & 23.84\% & 0.124s & 0.1189 \\
    $0.5$ & 50 & 3.27 & 0.592 & 18.10\% & 0.429s & 0.0784 \\
    $0.5$ & 100 & 3.28 & 0.561 & 17.10\% &1.414s & 0.0589 \\
    $0.5$ & 200 & 3.29 & 0.532 & 16.17\% & 5.460s & 0.0380 \\[1ex]
    $1.5$ & 20 & 5.13 & 0.697 & 13.59\% & 0.121s & 0.1177 \\
    $1.5$ & 50 & 5.13 & 0.653 & 12.73\% & 0.429s & 0.0778 \\
    $1.5$ & 100 & 5.14 & 0.620 & 12.06\% & 1.404s & 0.0555 \\
    $1.5$ & 200 & 5.15 & 0.590 & 11.46\% & 5.338s & 0.0349 \\
  \end{tabular}
\end{center}
\caption{Numerical test for Example~\ref{CSSysLinQuad5D} (5-dimensional stochastic).}\label{TabCSSysLinQuad5D}
\end{table}
\end{example}

\begin{figure}[t] \label{FigStConvergence}
    \noindent\makebox[\textwidth]{
      \includestandalone[width=\textwidth, mode=image|tex]{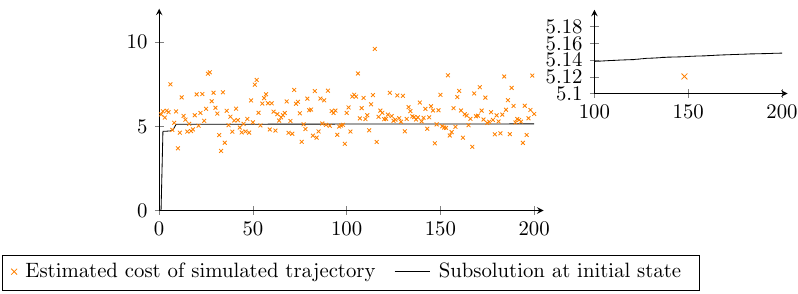}
    }
  \caption{Convergence at initial state for Example~\ref{CSSysLinQuad5D} (5-dimensional stochastic) with $h = 0.01$, $c = 1.5$.}
\end{figure}

From Table~\ref{TabCSSysLinQuad5D} and Figure~\ref{FigStConvergence}, we see that convergence is much slower than in the deterministic case. After the first 100 iterations, improvements in the approximation error become small. After 200 iterations, we still have a relative error between 24\% ($c = 0.0$) and 11\% ($c = 1.5$). As in the deterministic case, convergence speed may strongly depend on the choice of parameter values like the cost coefficient $c$. \new{In this example, if $c = 0.0$, then we have zero running costs (since $\bar{f} \equiv 0$ here). For our subsolutions to come close to the value function at initial time, all costs have to be propagated backwards from terminal time, as the terminal costs are the only source of costs when $c = 0.0$. For $c > 0$, running costs contribute to the value at initial time along the entire state trajectory, and they are propagated backwards more quickly. This may explain why the relative error (for fixed number of iterations) decreases as the cost coefficient $c$ increases.}

The computational cost is higher than in the deterministic case since evaluation of the Bellman operator now requires to compute expected values. As we have chosen a random walk discretization of the noise, expected values in dimension five correspond to convex combinations of $2^5 = 32$ summands. This leads to a ``built-in'' curse-of-dimensionality with respect to $d_1$, the noise dimension. The overall computation time, as compared to Example~\ref{CDSysLinQuad5D}, increases nevertheless only by a factor around five. The main computational burden actually lies in the Monte Carlo simulation for the cost estimates when using a subsolution produced after several hundred iterations. 

For Example~\ref{CSSysLinQuad5D}, an optimal feedback strategy can be found explicitly. Using that strategy to compare costs, we see that the feedback strategies induced by the subsolutions generated by our algorithm do not perform too badly; cf.\ last column of Table~\ref{TabCSSysLinQuad5D}.  In fact, the relative error after 200 iterations lies between 1.7\% ($c = 0.0$) and 0.6\% ($c = 1.5$), an order of magnitude better than the relative error in terms of the value at initial state.

\begin{example}[Brownian particle in space] \label{CSSysOUMotionSpace}
Dimensions $d = 6$, $d_{1} = d_2 = 3$, control actions in the closed ball $\Gamma \doteq \bar{B}(0,2)$,
\begin{align*}
  &A = \begin{pmatrix}
   0.0 & 0.0 & 0.0 & 1.0 & 0.0 & 0.0 \\ 
   0.0 & 0.0 & 0.0 & 0.0 & 1.0 & 0.0 \\ 
   0.0 & 0.0 & 0.0 & 0.0 & 0.0 & 1.0 \\ 
   0.0 & 0.0 & 0.0 & -0.2 & 0.0 & 0.0 \\ 
   0.0 & 0.0 & 0.0 & 0.0 & -0.2 & 0.0 \\ 
   0.0 & 0.0 & 0.0 & 0.0 & 0.0 & -0.2
  \end{pmatrix},&
  &B = \begin{pmatrix}
   0.0 & 0.0 & 0.0 \\ 
   0.0 & 0.0 & 0.0 \\ 
   0.0 & 0.0 & 0.0 \\ 
   1.0 & 0.0 & 0.0 \\ 
   0.0 & 1.0 & 0.0 \\ 
   0.0 & 0.0 & 1.0 
  \end{pmatrix},& \\
  &C = \begin{pmatrix}
     0.0 & 0.0 & 0.0 \\ 
     0.0 & 0.0 & 0.0 \\ 
     0.0 & 0.0 & 0.0 \\ 
     0.25 & 0.0 & 0.0 \\ 
     0.0 & 0.25 & 0.0 \\ 
     0.0 & 0.0 & 0.25 
    \end{pmatrix},&
  &\bar{f}(x)\doteq \frac{x_{1}^{2} + x_{2}^{2} + x_{3}^{2}}{2},\quad F\equiv 1.&
\end{align*}
Table~\ref{TabCSSysOUMotionSpace} summarizes data of a numerical test with running cost coefficient $c = 0.5$, time horizon $T = 2$, and initial state $x_{0}\doteq (1, -3, 2, 0, 0, 0)$ starting from the constant subsolution $w^{(0)} \equiv 0$.

\begin{table} 
\begin{center}
  \begin{tabular}{l|c|c|c|c|r}
    $h$ & \#iter. & value below & $\Delta_{(above - below)}$ & relative $\Delta$ & CPU time\\
    \hline
    $0.01$ & 20 & 10.78 & 0.1087 & 1.01\% & 0.049s \\
    $0.01$ & 50 & 10.78 & 0.0960 & 0.89\% & 0.159s \\
    $0.01$ & 100 & 10.79 & 0.0914 & 0.85\%  & 0.479s \\
    $0.01$ & 200 & 10.79 & 0.0855 & 0.79\%  & 1.675s \\[1ex]
    $0.001$ & 20 & 10.73 & 0.0937 & 0.87\% & 0.416s \\
    $0.001$ & 50 & 10.73 & 0.0907 & 0.84\% & 1.509s \\
    $0.001$ & 100 & 10.74 & 0.0851 & 0.79\%  & 4.700s \\
    $0.001$ & 200 & 10.74 & 0.0824 & 0.77\%  & 16.641s \\
  \end{tabular}
\end{center}
\caption{Numerical test for Example~\ref{CSSysOUMotionSpace} (\new{Brownian particle in space}).}\label{TabCSSysOUMotionSpace}
\end{table} 
\end{example}

From Table~\ref{TabCSSysOUMotionSpace} we find, as in the previous examples, fast initial convergence. A relative error around $1\%$ is reached after the first twenty iterations. The error is slightly lower for discretization parameter $h = 0.001$ than for $h = 0.01$, again because of the Hamiltonian approximation in computing one-step control actions. As expected, computation time increases by a factor of around ten when passing from $200$ time steps ($h = 0.01$) to $2000$ ($h = 0.001$). As in the deterministic case, we see that convergence speed strongly depends on the problem structure. Indeed, convergence is much better for Example~\ref{CSSysOUMotionSpace} than for Example~\ref{CSSysLinQuad5D}, thanks to the fact that only three of the six state components are controlled and affected by noise.

\new{
Lastly, we create a stochastic version of the $N$-particle system with mean field interaction from Example~\ref{CDSysNParticleMF} by adding a common noise to the acceleration components of all particles. The noise space will thus always be three-dimensional, while state and control space dimensions are proportional to the number of particles $N$. Running costs as well as terminal costs are the same as for the deterministic version, and also initial positions and velocities are chosen in the same way as there.
  
\begin{example}[$N$-particle stochastic with common noise] \label{CSSysNParticleMF}
  Dimensions $d = 6N$, $d_1 = 3$, $d_2 = 3N$, where $N\in \mathbb{N}$ number of particles, control actions in the $d_2$-dimensional closed ball $\Gamma \doteq \bar{B}(0,2)$, 
    \begin{align*}
    &A = (a_{kl}) \text{ with } a_{kl} = \begin{cases}
    1 &\text{if } (k\! \mod 6) \in \{1,2,3\} \text{ and } l=k+3, \\
    -\frac{1}{4N} &\text{if } (k\! \mod 6) \in \{4,5,0\} \text{ and } (|l-k|\! \mod 6) = 3, \\
    0 &\text{else},
    \end{cases} \\
    & B = (b_{kl}) \text{ with } b_{kl} = \begin{cases}
    1 &\text{if } k = 6i+3+j \text{ and } l=3i+j \\
    &\text{for some } i\in \{0,\ldots,N-1\},\; j\in \{1,2,3\},\\
    0 &\text{else},
    \end{cases} \\
    & C = (c_{kl}) \text{ with } c_{kl} = \begin{cases}
    0.25 &\text{if } k = 6i+3+j \text{ and } l=j \\
    &\text{for some } i\in \{0,\ldots,N-1\},\; j\in \{1,2,3\},\\
    0 &\text{else},
    \end{cases} \\
    &\bar{f} \equiv 0,\; c = \frac{1}{2N}, \quad F(x) \doteq 1 + \frac{1}{N} \sum_{j=1}^{3} \left( \sum_{i=0}^{\lfloor N/2 \rfloor - 1}  (x_{6i+j}-1)^{2} + \sum_{i=\lfloor N/2\rfloor}^{N-1} (x_{6i+j}+1)^{2} \right).
    \end{align*}
    Table~\ref{TabCSNParticleMF} summarizes data of a numerical test with time horizon $T = 2$ and randomly chosen initial configuration $x_{0}$ such that
    \begin{align*}
    & \sum_{j=1}^{3} x_{0,6i+j}^2 = 25,&  &\sum_{j=1}^{3} x_{0,6i+3+j}^2 = 0, & &i\in \{0,\ldots,N-1\}.&
    \end{align*}
  The time discretization parameter is set to $h = 0.001$, corresponding to $2000$ time steps at each iteration. The number of particles $N$ goes through the set $\{1, 2, 5, 10\}$. The first column of Table~\ref{TabCSNParticleMF} gives the particle number $N$, while the second and third column show the corresponding dimensions of the state space $\mathcal{X} = \mathbb{R}^{d}$ and the control space $\Gamma \doteq \bar{B}(0,2) \subset \mathbb{R}^{d_2}$. The common noise is always three-dimensional. 
    
  The fourth column shows the number of iterations of the algorithm, equal to either $20$ or $50$, while the fifth column reports the value of the subsolution at initial state computed after the indicated number of iterations. The difference between this value from below and an estimated value from above, which is computed through Monte Carlo simulation as the average cost associated with $10^{4}$ trajectories generated according to the feedback strategy induced by the last subsolution, is given in column six. Column seven shows the corresponding relative error: (estimated value from above minus value from below) divided by value from below. The computing time in seconds appears in the last column. 
    
  \end{example}

  \begin{table} 
    \new{
      \begin{center}
        \begin{tabular}{l|c|c|c|c|c|c|r}
          $N$ & $d$ & $d_2$ & \#iter. & value below & $\Delta_{(above - below)}$ & relative $\Delta$ & CPU time \\
          \hline
          1 & 6 & 3 & 20 & 2.42 & 0.283 & 11.69\% & 0.290s \\
          1 & 6 & 3 & 50 & 2.42 & 0.237 & 9.82\% & 1.186s \\
          2 & 12 & 6 & 20 & 7.48 & 0.421 & 5.63\% & 0.596s \\
          2 & 12 & 6 & 50 & 7.49 & 0.404 & 5.39\% & 2.093s \\
          5 & 30 & 15 & 20 & 11.26 & 0.408 & 3.63\% & 2.054s \\
          5 & 30 & 15 & 50 & 11.26 & 0.402 & 3.57\% & 6.438s \\
          10 & 60 & 30 & 20 & 16.58 & 0.536 & 3.23\% & 7.230s \\
          10 & 60 & 30 & 50 & 16.66 & 0.451 & 2.71\% & 20.814s\\
        \end{tabular}
      \end{center}
      \caption{Numerical test for Example~\ref{CSSysNParticleMF} ($N$-particle stochastic with three-dimensional common noise).}\label{TabCSNParticleMF}
    }
  \end{table}
  
  Table~\ref{TabCSNParticleMF} suggests that computation time grows sub-quadratically in the number of particles and, equivalently, in the state and control space dimensions. We have re-scaled running and terminal costs in accordance with the mean field interaction in the dynamics. Minimal costs are thus expected to converge to some optimal limit costs as $N$ tends to infinity, in a way analogous to the law of large numbers (with a common noise). This may explain why the relative error actually decreases as the number of particles increases. 

}

\bigskip
In conclusion, regarding our numerical tests, for both deterministic and stochastic problems we find good initial convergence. The actual speed of convergence strongly depends on the problem structure as well as parameter values. Thanks to the subsolution property, one can always obtain a posteriori error bounds. For deterministic problems, we find fast convergence with errors getting down to machine precision \new{even for problems with several hundred state and control dimensions}. Computation time scales \new{(sub-)linearly in the number of time steps, while it appears to scale sub-quadratically in the number of iterations as well as the number of state and control dimensions.} Convergence in terms of the value at initial state can be much slower for stochastic problems. The subsolution generated by our algorithm may nevertheless yield a sub-optimal feedback strategy that is useful in terms of associated costs.


\bibliographystyle{plain}

\end{document}